\newtheorem{conj}{Conjecture}
\newtheorem{thm}{Theorem}[section]
\newtheorem{lemm}[thm]{Lemma}
\newtheorem{cor}[thm]{Corollary}
\newtheorem{prop}[thm]{Proposition}
\newtheorem{rmk}[thm]{Remark}
\theoremstyle{definition}
\newtheorem{defi}[thm]{Definition}
\newcommand{\lapla}{\Delta}
\newcommand{\p}{\phi}
\newcommand{\met}{\langle \cdot , \cdot \rangle}
\title{Biminimal properly immersed submanifolds \\
in complete Riemannian manifolds\\
 of non-positive curvature}
\author{Shun Maeta}
\thanks{supported by 
Research Fellowships of the Japan Society for the Promotion of Science for Young Scientists, No.~23-6949.}
\keywords{biharmonic maps, biharmonic submanifolds, Chen's conjecture, generalized Chen's conjecture, minimal submanifolds}
\subjclass[2000]{primary 58E20, secondary 53C43}
\address{\footnotesize{Division of Mathematics, Graduate School of Information Sciences,
 Tohoku University, Sendai 980-8579, Japan.}
 }
\email{maeta@ims.is.tohoku.ac.jp~{\it or}~shun.maeta@gmail.com}
\begin{document} 
\maketitle 
\markboth{Biminimal properly immersed submanifolds} 
{Shun Maeta}

\begin{abstract} 
We consider a non-negative biminimal properly immersed submanifold $M$ (that is, a biminimal properly immersed submanifold with $\lambda\geq0$) in a complete Riemannian manifold $N$ with non-positive sectional curvature.
 Assume that the sectional curvature $K^N$ of $N$ satisfies $K^N\geq-L(1+{\rm dist}_N(\cdot, q_0)^2)^{\frac{\alpha}{2}}$ for some $L>0,$ $2>\alpha \geq 0$ and $q_0\in N$.
   Then, we prove that $M$ is minimal.
As a corollary, we give that any biharmonic properly immersed submanifold in a hyperbolic space is minimal.
 These results give affirmative partial answers to the global version of generalized Chen's conjecture.
\end{abstract}

\vspace{10pt}


\section{\bf Introduction}\label{intro} 
Theory of harmonic maps has been applied into various fields in differential geometry.
 Harmonic maps between two Riemannian manifolds are
 critical points of the energy functional $E(\p)=\frac{1}{2}\int_M\|d\p\|^2dv_g$, for smooth maps $\p:(M^m,g)\rightarrow (N^n,h)$ from an $m$-dimensional Riemannian manifold into an $n$-dimensional Riemannian manifold, 
 where $dv_g$ denotes the volume element of $g.$

On the other hand, in 1983, J. Eells and L. Lemaire \cite{jell1} proposed the problem to consider
 {\em polyharmonic maps of order k}.
In 1986, G.Y. Jiang \cite{jg1} studied {\em biharmonic maps} (that is, polyharmonic maps of order $2$) which are critical points of the {\em bi-energy}
 \begin{equation}
 E_2(\p):=\int_M |\tau(\p)|^2 dv_g,
 \end{equation} 
 where $\tau(\p)$  denotes the {\em tension field} of $\p.$
The Euler-Lagrange equation of $E_2$ is 
$$\tau_2(\p):=-\lapla^\p\tau(\p)-\sum^m_{i=1}R^{N}(\tau(\p),d\p(e_i))d\p(e_i)=0,$$
where 
$\lapla^{\p}:=\sum^m_{i=1}(\nabla^{\p}_{e_i}\nabla^{\p}_{e_i}-\nabla^{\p}_{\nabla_{e_i}e_i})$, 
$R^{N}$ is the Riemannian curvature of $N$ i.e., $R^{N}(X,Y)Z:=[\nabla^N_{X},\nabla^N_Y]Z-\nabla^N_{[X,Y]}Z$ for any vector field X, Y and Z on $N$,
 and $\{e_i\}_{i=1}^m$ is a local orthonormal frame field on $M$.

If an isometric immersion $\p:(M,g)\rightarrow (N,h)$ is biharmonic,
 then $M$ is called a {\em biharmonic submanifold} in $N$.
 In this case, we remark that the tension field $\tau(\p)$ of $\p$ is written as $\tau(\p)=m{\bf H}$, where ${\bf H}$ is the mean curvature vector field of $M$.

For biharmonic submanifolds, 
there is an interesting problem, namely Chen's conjecture 
(cf.~\cite{Chen}).

\begin{conj} \label{Chen}
Any biharmonic submanifold in $\mathbb{E}^n$ is minimal. 
\end{conj}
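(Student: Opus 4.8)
The plan is to exploit the flatness of the ambient space to turn the biharmonic system into a second-order identity for the mean curvature, and then to run a Bochner-type argument. Since $N=\mathbb{E}^n$ is flat we have $R^N\equiv 0$, so the biharmonic equation $\tau_2(\p)=0$ collapses to $\lapla^\p\tau(\p)=0$. Writing the isometric immersion as a vector-valued map $\p:M\to\mathbb{E}^n$, Beltrami's formula gives $\tau(\p)=\lapla\p=m\mathbf{H}$, where $\lapla$ is the Laplace--Beltrami operator applied to each Euclidean coordinate; since the pullback bundle $\p^{-1}T\mathbb{E}^n$ is flat and trivial, biharmonicity is exactly $\lapla^\p(m\mathbf{H})=0$, equivalently each coordinate of $\p$ is a biharmonic function on $M$ and $\mathbf{H}$ is harmonic for the rough Laplacian of the pullback bundle.

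Next I would compute the Bochner identity for the norm of the mean curvature:
\begin{equation*}
\tfrac{1}{2}\lapla|\mathbf{H}|^2=|\nabla^\p\mathbf{H}|^2+\la \mathbf{H},\lapla^\p\mathbf{H}\ra=|\nabla^\p\mathbf{H}|^2\geq 0,
\end{equation*}
so $|\mathbf{H}|^2$ is a nonnegative subharmonic function on $M$. This single inequality is the engine of the whole argument.

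For the compact case the conclusion is immediate. Using the self-adjointness of $\lapla$ on a closed $M$ together with $\lapla^2\p=\lapla(m\mathbf{H})=0$, integration by parts twice gives
\begin{equation*}
m^2\int_M|\mathbf{H}|^2\,dv_g=\int_M|\lapla\p|^2\,dv_g=\int_M\la\p,\lapla^2\p\ra\,dv_g=0,
\end{equation*}
hence $\mathbf{H}\equiv 0$ and $M$ is minimal. Alternatively one integrates the subharmonicity identity to force $\nabla^\p\mathbf{H}=0$ and then concludes as above.

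The hard part---and the reason the statement is a conjecture rather than a theorem---is the \emph{complete, non-compact} case, where neither integration by parts nor the maximum principle is available for free. My plan here is a Karp--Yau cutoff scheme: choose Lipschitz functions $\eta_R$ equal to $1$ on a geodesic ball $B_R$, supported in $B_{2R}$, with $|\nabla\eta_R|\leq C/R$, test the identity $\lapla^\p\mathbf{H}=0$ against $\eta_R^2\mathbf{H}$, and integrate by parts to obtain
\begin{equation*}
\int_{B_R}|\nabla^\p\mathbf{H}|^2\,dv_g\leq \frac{C}{R^2}\int_{B_{2R}}|\mathbf{H}|^2\,dv_g.
\end{equation*}
If the right-hand side can be forced to vanish as $R\to\infty$, then $\nabla^\p\mathbf{H}=0$, $|\mathbf{H}|^2$ is constant, and a second application of the same test function drives this constant to $0$. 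The genuine obstacle is precisely the control of the boundary term $\int_{B_{2R}}|\mathbf{H}|^2$: without any growth, integrability, or properness hypothesis there is no mechanism preventing a nonconstant positive subharmonic $|\mathbf{H}|^2$ on a general complete $M$, so the cutoff estimate need not close. This is exactly the gap that additional assumptions---$L^p$ bounds on $\mathbf{H}$, finiteness of the total curvature, or (as in the present paper) properness together with a lower bound on the growth of the ambient curvature---are designed to fill, and a hypothesis-free Liouville theorem for $|\mathbf{H}|^2$ would settle the conjecture in full.
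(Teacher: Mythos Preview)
The statement you are attempting is Conjecture~\ref{Chen} in the paper; the paper does \emph{not} prove it.  It is listed precisely as an open conjecture, and the paper only establishes partial cases under extra hypotheses (properness of the immersion, completeness, curvature bounds on the ambient space).  Your write-up in fact acknowledges this: you identify the compact case as classical and then explain why the cutoff argument in the noncompact case does not close without an auxiliary assumption.  So there is no ``paper's proof'' to compare against, and your final paragraph is an honest statement of the obstruction rather than a proof.

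Two substantive remarks are still worth making.  First, Chen's conjecture as stated carries \emph{no} completeness hypothesis on $M$; the paper stresses that it is a problem in local differential geometry.  Your dichotomy ``compact versus complete noncompact'' therefore already misses the general case, and this is why the paper reformulates the problem as Conjecture~\ref{Akutagawa Maeta} before attacking it.

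Second, your Bochner identity $\tfrac{1}{2}\lapla|\mathbf H|^2=|\nabla^{\p}\mathbf H|^2$ contains much more than the bare subharmonicity $\lapla|\mathbf H|^2\geq 0$ that you extract.  By Weingarten, $\nabla^{\p}_X\mathbf H=-A_{\mathbf H}X+\nabla^{\perp}_X\mathbf H$, so $|\nabla^{\p}\mathbf H|^2\geq |A_{\mathbf H}|^2\geq m|\mathbf H|^4$, yielding the quartic inequality
\[
\lapla|\mathbf H|^2\;\geq\;2m\,|\mathbf H|^4,
\]
which is exactly Lemma~\ref{key lemma} of the paper specialized to $\mathbb E^n$.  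This sharper inequality, not mere subharmonicity, is the engine behind the paper's partial results: an inequality of the form $\lapla u\geq k u^2$ feeds into a Cheng--Yau type maximum principle (Lemma~\ref{proper sub mfd minimal} via an extrinsic-distance test function, or Lemma~\ref{Cheng-Yau} under a Ricci lower bound) and forces $u\equiv 0$.  Your $L^2$-cutoff scheme, by contrast, only uses $\lapla|\mathbf H|^2\geq 0$ and therefore needs an $L^2$ bound on $\mathbf H$ to close; the quartic inequality trades that integral hypothesis for a geometric one (properness).  Neither route, of course, settles the conjecture without some such hypothesis.
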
 

There are many affirmative partial answers to Conjecture~$\ref{Chen}$ 
(cf.~\cite{Chen}, \cite{Chen-Ishikawa-2}, \cite{Leuven}, \cite{Dimi}, \cite{Hasanis-Vlachos}). 
Conjecture $\ref{Chen}$ is solved completely 
if $M$ is one of the following: 
(a) a curve \cite{Dimi}, 
(b) a surface in $\mathbb{E}^3$ \cite{Chen}, 
(c) a hypersurface in $\mathbb{E}^4$ \cite{Leuven}, \cite{Hasanis-Vlachos}. 

Note that, 
since there is no assumption of {\it completeness} for submanifolds in Conjecture~1, 
in a sense it is a problem in {\it local} differential geometry.  
Recently, Conjecture~1 was reformulated  into a problem 
in {\it global} differential geometry as follows (cf.~\cite{kasm1},~\cite{sm7},~\cite{N-U-1},~\cite{N-U-2}):

\begin{conj} \label{Akutagawa Maeta}
Any {\rm complete} biharmonic submanifold in $\mathbb{E}^n$ is minimal. 
\end{conj}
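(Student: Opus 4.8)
The plan is to reduce the statement, via the Bochner machinery already implicit in the bi-energy, to a Liouville-type assertion about a single subharmonic function on the complete manifold $M$. The starting observation is that a biharmonic isometric immersion is precisely the $\lambda=0$ case of a non-negative biminimal immersion, and that $\mathbb{E}^n$ has $\Riem\equiv 0$; hence the curvature term $\sum_i R^{N}(\tau(\p),d\p(e_i))d\p(e_i)$ in $\tau_2(\p)$ vanishes identically and the biharmonic equation collapses to $\lapla^\p\tau(\p)=0$, i.e. the tension field (equivalently $m{\bf H}$) is a harmonic section of the pulled-back bundle. In particular $\mathbb{E}^n$ trivially satisfies the ambient curvature hypotheses of the paper's main theorem ($K^N\equiv 0$, so both $K^N\le 0$ and $K^N\ge -L(1+\dist_N(\cdot,q_0)^2)^{\alpha/2}$), so the only real gap between the conjecture and the theorem is that the conjecture assumes completeness rather than proper immersion.

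First I would compute the Laplace--Beltrami Laplacian of $u:=|\tau(\p)|^2=m^2|{\bf H}|^2$. The standard Weitzenb\"ock computation, using $\lapla^\p\tau(\p)=0$, gives
\begin{equation}
\tfrac{1}{2}\lapla u=\la \lapla^\p\tau(\p),\tau(\p)\ra+|\nabla^\p\tau(\p)|^2=|\nabla^\p\tau(\p)|^2\ge 0,
\end{equation}
so $u$ is a non-negative subharmonic function on $M$. The entire problem is thereby transferred to showing that this particular subharmonic function is constant, and then that the constant is zero.

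Next I would attempt to force constancy of $u$ by a compactly supported cutoff argument. Pairing $\lapla^\p\tau(\p)=0$ with $\eta^2\tau(\p)$ for a cutoff $\eta$ and integrating by parts yields, after Cauchy--Schwarz and Young's inequality,
\begin{equation}
\int_M\eta^2|\nabla^\p\tau(\p)|^2\,dv_g\le 4\int_M|\nabla\eta|^2\,|\tau(\p)|^2\,dv_g.
\end{equation}
Choosing the usual annular cutoff $\eta\equiv 1$ on $B_r$, supported in $B_{2r}$, with $|\nabla\eta|\le C/r$, the right-hand side tends to $0$ as $r\to\infty$ provided the bi-energy $\int_M|\tau(\p)|^2\,dv_g$ is finite (equivalently $\int_M|{\bf H}|^2\,dv_g<\infty$). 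This would give $\nabla^\p\tau(\p)\equiv 0$, hence $|{\bf H}|$ constant; since a complete non-minimal submanifold of $\mathbb{E}^n$ has infinite volume, a nonzero constant would contradict finiteness of $\int_M|{\bf H}|^2\,dv_g$, forcing ${\bf H}\equiv 0$.

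The main obstacle is that the conjecture assumes only completeness, with no a priori integrability or properness hypothesis, whereas every device above needs global control that completeness alone does not supply: the cutoff estimate requires finiteness of $\int_M|{\bf H}|^2\,dv_g$, and the alternative route through the Omori--Yau generalized maximum principle (as in the properly immersed setting of this paper) needs both completeness and a lower Ricci bound together with the control of the distance function that properness provides. Bridging the gap between merely complete and properly immersed, or removing the $L^2$ hypothesis on ${\bf H}$ altogether, is exactly the hard part and is the reason the conjecture is only settled here under stronger hypotheses; I would therefore expect any unconditional proof to hinge on new geometric input ruling out complete biharmonic submanifolds whose mean curvature is unbounded or too slowly decaying to be controlled by the subharmonicity of $|{\bf H}|^2$ alone.
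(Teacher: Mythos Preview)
The statement you were asked to prove is Conjecture~\ref{Akutagawa Maeta}, not a theorem: the paper does not prove it and explicitly presents it as an open problem. There is therefore no ``paper's own proof'' to compare your attempt against. The paper's contributions are partial affirmative answers under additional hypotheses---properly immersed (Theorem~1.1, Corollary~\ref{main cor}) or Ricci curvature bounded from below (Corollary~\ref{Ric below cor})---and you correctly diagnose in your final paragraph that completeness alone is the obstruction.

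That said, your sketch underuses the available information. You record only $\tfrac{1}{2}\lapla u=|\nabla^\p\tau(\p)|^2\ge 0$, discarding the shape-operator contribution hidden in $|\nabla^\p\tau(\p)|^2$. Since $\nabla^\p_X{\bf H}=-A_{\bf H}X+\nabla^{\perp}_X{\bf H}$, one has $|\nabla^\p\tau(\p)|^2\ge m^2|A_{\bf H}|^2\ge m^3|{\bf H}|^4$, which is exactly the paper's key inequality $\lapla|{\bf H}|^2\ge 2m|{\bf H}|^4$ (Lemma~\ref{key lemma}). This quartic lower bound---not mere subharmonicity---is what drives both the paper's proper-immersion argument (Lemma~\ref{proper sub mfd minimal}) and the Cheng--Yau route (Lemma~\ref{Cheng-Yau}). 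Your cutoff argument with the $L^2$ hypothesis on ${\bf H}$ is the Nakauchi--Urakawa approach \cite{N-U-1,N-U-2}, which the paper cites but does not reproduce; it is correct under that extra assumption but, as you note, does not settle the conjecture as stated.
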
 

On the other hand, Conjecture $\ref{Chen}$ was generalized as follows:
{\em Any biharmonic submanifold in a Riemannian manifold with non-positive sectional curvature is minimal}
(cf. \cite{absmco1}, \cite{absmco2}, \cite{rcsmco1}, \cite{rcsmco2}, \cite{rcsmpp1}). 
 This generalization is also a problem in local differential geometry.
 Y.-L. Ou and L. Tang \cite{ylolt1} gave a counterexample of this conjecture 
 (see~\cite{jg1} for an affirmative answer).
With these understandings, it is natural to consider the following conjecture.

\begin{conj}\label{glo gen Chen}
Any complete biharmonic submanifold in a Riemannian manifold with non-positive sectional curvature is minimal. 
\end{conj}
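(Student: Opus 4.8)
The plan is to follow the classical Bochner-type strategy and then confront the step that makes the non-compact case genuinely hard. Let $\pmton$ be a complete biharmonic isometric immersion with $K^N\leq 0$, so that $\tau(\p)=m{\bf H}$ satisfies $\lapla^\p\tau(\p)=-\sum_{i=1}^m R^N(\tau(\p),d\p(e_i))d\p(e_i)$. First I would compute the rough Laplacian of the function $f:=|\tau(\p)|^2=m^2|{\bf H}|^2$ by the Weitzenb\"ock identity $\frac12\lapla^\p f=\la\lapla^\p\tau(\p),\tau(\p)\ra+|\nabla^\p\tau(\p)|^2$. Substituting the biharmonic equation gives $\frac12\lapla^\p f=-\sum_i\la R^N(\tau(\p),d\p(e_i))d\p(e_i),\tau(\p)\ra+|\nabla^\p\tau(\p)|^2$. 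Since $\tau(\p)$ is normal and each $d\p(e_i)$ is tangent, the $i$-th curvature term equals $K^N(\tau(\p),d\p(e_i))\,|\tau(\p)|^2|d\p(e_i)|^2$, which is $\leq 0$ by the curvature assumption; hence $\frac12\lapla^\p f\geq|\nabla^\p\tau(\p)|^2\geq 0$ and $f$ is subharmonic on $M$.

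The second step is to convert subharmonicity into vanishing. On a compact $M$ this is immediate: a subharmonic function is constant, so $\nabla^\p\tau(\p)\equiv 0$ and every curvature term vanishes; then the one-form $\omega(X):=\la d\p(X),\tau(\p)\ra$ has divergence $\d\,\omega=|\tau(\p)|^2$ (using $\mathrm{tr}\,\nabla d\p=\tau(\p)$ together with $\nabla^\p\tau(\p)=0$), and integrating over the closed manifold forces $|\tau(\p)|^2\equiv 0$, i.e.\ minimality. For complete non-compact $M$ I would try to replace this integration by a Liouville-type theorem for non-negative subharmonic functions --- for instance Yau's theorem that a non-negative $L^p$ ($p>1$) subharmonic function is constant, or a volume-growth/parabolicity criterion --- and then reinsert the divergence identity, now tested against a Karp-type cut-off $\psi$, in order to push $\int_M\psi^2|\tau(\p)|^2\to 0$.

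The main obstacle, and the reason the statement remains a conjecture, is precisely this constancy/vanishing step: on a general complete manifold, subharmonicity of $f$ yields nothing without extra geometric control. To run Yau's or Karp's Liouville theorem one needs an a priori integrability or growth bound on $|{\bf H}|$, which is not available from $K^N\leq 0$ alone, and the divergence identity $\d\,\omega=|\tau(\p)|^2$ is not globally integrable without a decay or volume-growth hypothesis. This is exactly where the hypotheses of the partial results intervene --- properness of the immersion, the sign condition $\lambda\geq 0$ in the biminimal setting, and a subquadratic lower bound $K^N\geq-L(1+\dist_N(\cdot,q_0)^2)^{\frac{\alpha}{2}}$ with $\alpha<2$ --- since together they supply the cut-off estimates and the two-sided curvature control needed to integrate the Weitzenb\"ock inequality against Karp's test functions and conclude. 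I would therefore expect a full proof of the unrestricted conjecture to require either a new Liouville theorem valid with no integrability hypothesis, or a fundamentally different non-integral argument (for example a maximum principle at infinity of Omori--Yau type) that controls $|{\bf H}|$ directly.
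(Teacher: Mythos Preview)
The statement you were handed is Conjecture~\ref{glo gen Chen}; the paper does \emph{not} prove it. It remains open in full generality, and the paper only establishes it under extra hypotheses (proper immersion together with a subquadratic lower curvature bound, Corollary~\ref{main cor}; or Ricci curvature of $M$ bounded below, Corollary~\ref{Ric below cor}). You correctly recognise this and correctly locate the obstruction in the non-compact case, so in that sense your write-up is an accurate status report rather than a proof.

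There is, however, a substantive missed step in your Bochner computation, and it is exactly the point the paper exploits for its partial results. From $\tfrac12\Delta f\geq |\nabla^{\p}\tau(\p)|^2$ you extract only subharmonicity. But by Weingarten, $\nabla^{\p}_{e_i}\tau(\p)=m\bigl(-A_{\bf H}e_i+\nabla^{\perp}_{e_i}{\bf H}\bigr)$, so
\[
|\nabla^{\p}\tau(\p)|^2\;\geq\; m^2\sum_{i=1}^m |A_{\bf H}e_i|^2\;=\;m^2\,|{\bf H}|^2\,|B_n|^2\;\geq\; m^3\,|{\bf H}|^4,
\]
the last inequality being $|B_n|^2\geq m|{\bf H}|^2$ with $e_n={\bf H}/|{\bf H}|$. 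Dividing through by $m^2$ yields the paper's key inequality (Lemma~\ref{key lemma}):
\[
\Delta |{\bf H}|^2\;\geq\; 2m\,|{\bf H}|^4 .
\]
This is much stronger than $\Delta|{\bf H}|^2\geq 0$. With $u=|{\bf H}|^2$ one has $\Delta u\geq 2m\,u^2$, and this quadratic right-hand side is precisely what makes the Liouville/maximum-principle machinery bite: under Ricci of $M$ bounded below, the Cheng--Yau generalized maximum principle (Lemma~\ref{Cheng-Yau}) forces $u\equiv 0$ directly --- this is the Omori--Yau-type mechanism you speculate about at the end; under proper immersion plus the subquadratic lower bound on $K^N$, the paper runs a barrier argument on $(\rho^2-r(\p(\cdot))^2)^2u$ (Lemma~\ref{proper sub mfd minimal}), again relying essentially on the $u^2$ term. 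So the gap in your outline is not merely that subharmonicity is too weak --- you already saw that --- but that your own identity already contains the stronger inequality $\Delta u\geq 2m\,u^2$, and once you have it the auxiliary hypotheses you list become sufficient rather than merely suggestive. Your compact-case divergence argument is correct but unnecessarily elaborate for the same reason: with $\Delta u\geq 2m\,u^2$, the standard maximum principle gives $u\equiv 0$ in one line.
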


N. Nakauchi and H. Urakawa gave an affirmative partial answer to Conjecture $\ref{glo gen Chen}$ (cf. \cite{N-U-1}, \cite{N-U-2}).

\vspace{10pt}

An immersed submanifold $M$ in a Riemannian manifold $N$ is said to be {\em properly immersed}
 if the immersion is a proper map.
  K. Akutagawa and the author gave an affirmative partial answer to
 Conjecture $\ref{Chen}$ (Conjecture $\ref{Akutagawa Maeta}$ particularly) as follows (cf.~\cite{kasm1},~\cite{sm7}):
\begin{thm}[\cite{kasm1}]
Any biharmonic properly immersed submanifold in $\mathbb{E}^n$ is minimal.
\end{thm}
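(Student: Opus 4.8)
The plan is to turn biharmonicity into a tractable equation on $M$ and then run a Bochner-plus-cutoff argument in which properness plays the role usually taken by completeness together with a finiteness hypothesis. Since $\mathbb{E}^n$ is flat, the curvature term in $\tau_2(\phi)$ vanishes, so for the isometric immersion $\phi=x\colon M\to\mathbb{E}^n$ biharmonicity is equivalent to $\Delta^\phi\tau(\phi)=0$; identifying the pullback bundle $\phi^{-1}T\mathbb{E}^n$ with the trivial bundle, $\Delta^\phi$ becomes the componentwise Laplace--Beltrami operator $\Delta_M$, and the equation reads $\Delta_M\mathbf H=0$, where $\mathbf H$ is the $\mathbb R^n$-valued mean curvature vector (recall $\tau(\phi)=m\mathbf H$). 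Beltrami's formula $\Delta_M x=m\mathbf H$ then also gives $\Delta_M^2x=0$. Two consequences will be used: first, $\tfrac12\Delta_M|\mathbf H|^2=|\nabla\mathbf H|^2\ge 0$, so $|\mathbf H|^2$ is subharmonic; second, properness of $x$ forces $M$ to be complete and makes the extrinsic distance $\rho:=|x-q_0|$ (for a fixed $q_0\in\mathbb{E}^n$) a proper function with $|\nabla^M\rho|\le 1$, so the extrinsic balls $D_R:=\{\rho\le R\}$ exhaust $M$ by compact sets with smooth boundary for almost every $R$.

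Next I would set up two integral identities with a Lipschitz cutoff $\psi\ge 0$ of compact support. Pairing $\Delta_M\mathbf H=0$ with $\psi^2\mathbf H$ and integrating by parts (the boundary term vanishes since $\psi$ has compact support) gives, after Cauchy--Schwarz,
\[
\int_M\psi^2|\nabla\mathbf H|^2\,dv_g\ \le\ 4\int_M|\nabla\psi|^2|\mathbf H|^2\,dv_g .
\]
For the second identity, since $\mathbf H\perp TM$ the function $w:=\langle\mathbf H,x-q_0\rangle$ satisfies $\Delta_M w=-m|\mathbf H|^2$ and $|\nabla^M w|\le\rho\,|\nabla\mathbf H|$; pairing $\Delta_M w=-m|\mathbf H|^2$ with $\psi^2$ and integrating by parts yields
\[
m\int_M\psi^2|\mathbf H|^2\,dv_g\ \le\ 2\int_M\psi\,|\nabla\psi|\,\rho\,|\nabla\mathbf H|\,dv_g .
\]
I would also keep at hand the identity $\Delta_M\rho^2=2m(1+w)$ and the bound on $\Delta_M\rho$ it yields, since these are what give control of $\rho$ along the exhaustion $\{D_R\}$.

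Finally I would take $\psi=\psi_R$ equal to $1$ on $D_R$, supported in $D_{2R}$, with $|\nabla\psi_R|\le C/R$; properness makes these cutoffs compactly supported, so the two displays apply, and combining them via Cauchy--Schwarz produces an inequality of the form
\[
\int_{D_R}|\mathbf H|^2\ \le\ \frac{C'}{R}\Bigl(\int_{D_{2R}}|\mathbf H|^2\Bigr)^{1/2}\Bigl(\int_{D_{2R}}\rho^2|\nabla\psi_R|^2\Bigr)^{1/2}.
\]
The aim is to exploit the $\rho$- and $\Delta_M\rho$-identities, with a suitable choice of cutoff profile, to run a limiting (or iteration) argument forcing the right-hand side to $0$ as $R\to\infty$, whence $\int_M|\mathbf H|^2=0$ and $M$ is minimal; alternatively, once the quantities are controlled one gets $\nabla\mathbf H\equiv 0$, so $\mathbf H$ is a constant vector $v\in\mathbb R^n$, and since $\mathbf H$ is everywhere normal $\langle x,v\rangle$ is constant, placing $M$ in an affine hyperplane $\Pi$ whose (totally geodesic) mean curvature vector for $M$ is again $v\in T\Pi=v^\perp$, so $v=0$. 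The main obstacle is precisely this last limiting step: a priori neither $\int_{D_R}|\mathbf H|^2$ nor $\mathrm{Vol}(D_R)$ is controlled — a properly immersed submanifold can have arbitrarily fast volume growth, so one cannot simply reduce to a finite-energy situation — and a crude cutoff gives no decay; the real content is to feed the identities for $\rho$, $w$ and $\Delta_M\rho$ back into the two Bochner inequalities so as to close up a differential inequality for $R\mapsto\int_{D_R}|\mathbf H|^2$ along $\{D_R\}$. Properness is used essentially here, both to justify the integrations by parts on the compact pieces $D_R$ and to make $\{\rho\le R\}$ a genuine compact exhaustion; everything else is standard Bochner-formula and integration-by-parts bookkeeping.
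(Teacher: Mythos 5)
There is a genuine gap, and you have in fact located it yourself: your scheme produces the estimate
\[
\int_{D_R}|\mathbf H|^2\;\le\;\frac{C}{R}\Bigl(\int_{D_{4R}}|\mathbf H|^2\Bigr)^{1/2}\,\mathrm{Vol}(D_{2R})^{1/2},
\]
and without any a priori control on $\int_{D_R}|\mathbf H|^2$ or on $\mathrm{Vol}(D_R)$ (properness gives neither — extrinsic balls of a properly immersed submanifold can have arbitrarily fast volume growth) this cannot be closed. The two integral identities you derive (from $\Delta_M\mathbf H=0$ tested against $\psi^2\mathbf H$, and from $\Delta_M\langle\mathbf H,x-q_0\rangle=-m|\mathbf H|^2$ tested against $\psi^2$) are correct, and they are essentially the Nakauchi--Urakawa route, which is why that route needs an extra hypothesis such as finite bienergy $\int_M|\mathbf H|^2<\infty$ or a lower Ricci bound. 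As written, your "proof" is a correct reduction plus an honest acknowledgement that the decisive step is missing.

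The idea that actually closes the argument in the paper (Lemma 3.1 combined with Lemma 3.5) is pointwise, not integral, and its engine is a \emph{quartic} differential inequality that your Bochner computation stops short of: from the normal component of the biharmonic equation, $\Delta^{\perp}\mathbf H=\sum_iB(A_{\mathbf H}e_i,e_i)$ in $\mathbb E^n$, one gets
\[
\Delta|\mathbf H|^2\;\ge\;2\sum_{i=1}^m|A_{\mathbf H}e_i|^2\;\ge\;\frac{2}{m}\bigl(\mathrm{tr}\,A_{\mathbf H}\bigr)^2\;=\;2m\,|\mathbf H|^4,
\]
whereas you only record the subharmonicity $\Delta|\mathbf H|^2=2|\nabla\mathbf H|^2\ge 0$, which carries no information on a noncompact $M$. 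With $u=|\mathbf H|^2$ satisfying $\Delta u\ge 2m\,u^2$, one maximizes $f=(\rho^2-r^2)^2u$ over the set $M\cap\phi^{-1}(\overline{\mathbf B_\rho})$ — compact precisely because the immersion is proper — and at an interior maximum point the relations $\nabla f=0$, $\Delta f\le 0$, together with $|\nabla r^2|^2\le 4mr^2$ and $\Delta r^2\le 2m+2mr|\mathbf H|$ (Hessian comparison is trivial in $\mathbb E^n$), give $f(p)\le c(1+\rho^2)^{3/2}$. Hence $u(x_0)\le c(1+\rho^2)^{3/2}/(\rho^2-r(\phi(x_0))^2)^2\to 0$ as $\rho\to\infty$, with no integration, no cutoff, and no volume bound anywhere. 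This is what properness buys: compactness of the sublevel sets of $r\circ\phi$ so that the localized maximum is attained. If you want to complete your write-up you should replace the integral bookkeeping by this Cheng--Yau-type localized maximum principle driven by the inequality $\Delta|\mathbf H|^2\ge 2m|\mathbf H|^4$.
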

For Conjecture $\ref{glo gen Chen}$, we consider a biharmonic properly immersed submanifold in a complete Riemannian manifold with non-positive sectional curvature.

\vspace{10pt}

Recently, E. Loubeau and S. Montaldo \cite{elsm1} introduced a {\em biminimal immersion} as follows:

\begin{defi}[\cite{elsm1}]
An immersion $\p:(M^m,g)\rightarrow (N^n,h)$, $m\leq n$ is called {\em biminimal} if it is a critical point of the functional 
$$E_{2,\lambda}(\p)=E_2(\p)+\lambda E(\p),\ \ \lambda\in \mathbb{R},$$
 for any smooth variation of the map $\p_t (-\varepsilon<t<\varepsilon)$, $\p_0=\p$ such that $\left. V=\frac{d\p_t}{dt}\right |_{t=0}$ is normal to $\p(M)$.
\end{defi}

The Euler-Lagrange equation of $E_{2,\lambda}$ is
$$[\tau_2(\p)]^{\perp}+\lambda[\tau(\p)]^{\perp}=0,$$
where $[\cdot]^{\perp}$ denotes the normal component of $[\cdot]$.
We call an immersion {\em free biminimal} if it is biminimal condition for $\lambda=0$.
 If $\p:(M,g)\rightarrow (N,h)$ is an isometric immersion, then the biminimal condition is
\begin{align}\label{biminimal eq}
\left[-\lapla^{\p}{\bf H}-\sum^m_{i=1}R^{N}({\bf H},d\p(e_i))d\p(e_i)\right]^{\perp}+\lambda{\bf H}=0,
\end{align}
for some $\lambda \in\mathbb{R}$, and then $M$ is called a {\em biminimal submanifold} in $N$.
 If $M$ is a biminimal submanifold with $\lambda \geq0$ in $N$, then $M$ is called a {\em non-negative biminimal submanifold} in $N$. 
\begin{rmk}\label{biharmonic is biminimal}
We remark that every biharmonic submanifold is free biminimal one.
\end{rmk}

 Before mentioning our main theorem, we define the following notion.

\begin{defi}
For a complete Riemannian manifold $(N,h)$ and $\alpha \geq 0,$
if the sectional curvature $K^N$ of $N$ satisfies
$$K^N \geq-L(1+{\rm dist}_N(\cdot,q_0)^2)^{\frac{\alpha}{2}},~~{\rm for~some }~L>0~{\rm and}~q_0\in N.$$
Then we shall call that $K^N$ has a {\em polynomial growth bound of order} $\alpha$ {\em from below.}
\end{defi}
In this article, our main theorem is the following.
 \begin{thm}\label{main theorem}
Let $(N,h)$ be a complete Riemannian manifold with non-positive sectional curvature. Assume that the sectional curvature $K^N$ has a 
polynomial growth bound of order less than $2$ from below.
Then, any non-negative biminimal properly immersed submanifold in $N$ is minimal.
 \end{thm}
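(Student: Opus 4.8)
The plan is to adapt the integral-estimate technique of Nakauchi--Urakawa (and Akutagawa--Maeta) to the biminimal setting. First I would recall the basic structural identity: for an isometric immersion the biminimal equation \eqref{biminimal eq} with $\lambda\geq 0$ gives
\begin{equation*}
\left[-\lapla^{\p}{\bf H}-\sum_{i=1}^m R^N({\bf H},d\p(e_i))d\p(e_i)\right]^{\perp}=-\lambda{\bf H}.
\end{equation*}
Taking the $L^2$ inner product with ${\bf H}$ against a cutoff function $\eta^2$ (where $\eta$ is a Lipschitz function on $N$ to be chosen), integrating by parts, and using that $K^N\leq 0$ so that $\langle -\sum_i R^N({\bf H},d\p(e_i))d\p(e_i),{\bf H}\rangle\geq 0$, one obtains an inequality of the shape
\begin{equation*}
\int_M \eta^2\,|\onabla^{\p}{\bf H}|^2\,dv_g + \lambda\int_M \eta^2|{\bf H}|^2\,dv_g \leq C\int_M |\nabla\eta|^2\,|{\bf H}|^2\,dv_g + (\text{lower order}),
\end{equation*}
after absorbing cross terms via Cauchy--Schwarz. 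Here the key point is that both terms on the left are non-negative (this is where $\lambda\geq 0$ and $K^N\leq 0$ enter), so if the right-hand side can be driven to zero we conclude $\onabla^{\p}{\bf H}\equiv 0$, hence $|{\bf H}|$ is constant, and then one more argument forces ${\bf H}\equiv 0$.

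Second, I would exploit \emph{proper immersion} together with the curvature growth hypothesis to choose the cutoff. Because $\p$ is proper, the extrinsic distance function $r(x)=\dist_N(\p(x),q_0)$ is a proper exhaustion of $M$; set $\eta=\eta_R$ to be a standard logarithmic cutoff built from $r$, equal to $1$ on $\{r\leq R\}$, supported in $\{r\leq R^2\}$ (or $e^R$), with $|\nabla\eta_R|\lesssim 1/(r\log R)$ or similar. The polynomial growth bound $K^N\geq -L(1+\dist_N(\cdot,q_0)^2)^{\alpha/2}$ with $\alpha<2$ is precisely what is needed to control the volume growth of extrinsic balls of $M$ — via a Bishop--Gromov type comparison on $N$ one gets that $\vol(\{r\leq \rho\})$ grows at most like $\exp(c\,\rho^{1+\alpha/2})$ with $1+\alpha/2<2$, and the borderline choice of cutoff makes $\int_M|\nabla\eta_R|^2|{\bf H}|^2\to 0$ as $R\to\infty$ provided $|{\bf H}|$ is bounded. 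I would therefore first establish, as a preliminary lemma, that $|{\bf H}|$ is bounded on $M$; this itself should follow from a maximum-principle / Omori--Yau type argument using the biminimal equation and the curvature bound, or alternatively from a first Moser-iteration step.

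Third, with $\onabla^{\p}{\bf H}=0$ in hand, $|{\bf H}|$ is a positive constant (assuming $M$ is not already minimal). Plugging back into the biminimal equation and pairing with ${\bf H}$ pointwise gives $\lambda|{\bf H}|^2 = \langle \sum_i R^N({\bf H},d\p(e_i))d\p(e_i),{\bf H}\rangle + |A^{\bf H}|^2 \cdot(\text{terms from }\lapla^\perp)$ — more precisely one uses the Gauss-type splitting of $\lapla^\p{\bf H}$ into tangential and normal parts and the fact that $\onabla^\perp{\bf H}=0$ to reduce everything to $\lambda|{\bf H}|^2 + |{\bf H}|^2\cdot(\text{nonneg curvature term}) + |A_{\bf H}|^2$-type quantities, all non-negative, summing to zero; since $|{\bf H}|\neq 0$ this forces $\lambda=0$ and the second fundamental form to satisfy a rigidity that, combined again with properness (infinite volume would be contradicted, or a Cauchy--Schwarz $|{\bf H}|^4\leq |A|^2|{\bf H}|^2/m$ argument integrated against a cutoff), yields ${\bf H}=0$, a contradiction. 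Hence $M$ is minimal.

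The main obstacle I expect is twofold and concentrated in the analytic estimates: (i) getting the a priori boundedness (or at least controlled growth) of $|{\bf H}|$ so that the cutoff integral $\int|\nabla\eta_R|^2|{\bf H}|^2$ actually vanishes in the limit — without it the whole scheme stalls, and handling it cleanly under only a polynomial (not uniform) lower curvature bound is delicate; and (ii) making the volume-growth comparison on $M$ rigorous, since $M$ is only \emph{immersed} in $N$ (not a metric ball in $N$), so one must pass through the extrinsic distance and the fact that properness makes $\{r\le\rho\}$ relatively compact, then bound its volume using the Hessian comparison for $\dist_N$ under the growth-restricted curvature — this is exactly the point where $\alpha<2$ is sharp, because $\alpha=2$ would allow exponential-squared volume growth which defeats the logarithmic cutoff. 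Everything else (integration by parts, Cauchy--Schwarz bookkeeping, the sign discussion forcing $\lambda=0$) is routine.
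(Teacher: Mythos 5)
Your overall architecture (test the biminimal equation against $\eta^2{\bf H}$, integrate by parts, use $K^N\leq 0$ and $\lambda\geq 0$ for signs, kill the error term with a cutoff) is the Nakauchi--Urakawa scheme, but the step on which everything hinges --- making $\int_M|\nabla\eta_R|^2|{\bf H}|^2\,dv_g\to 0$ --- does not go through, and I do not see how to repair it within your framework. First, Bishop--Gromov on $N$ bounds the volume of geodesic balls of $N$, not the $m$-dimensional volume of $M\cap\p^{-1}({\bf B}_\rho)$: a properly immersed submanifold can have arbitrarily large volume inside a fixed ambient ball (properness gives compactness of the extrinsic balls, hence finiteness of their volume, but no quantitative growth rate). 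Second, even granting your claimed bound $\exp(c\,\rho^{1+\alpha/2})$, that is \emph{exponential} volume growth, and no choice of cutoff (logarithmic or otherwise) makes $\int|\nabla\eta_R|^2|{\bf H}|^2$ vanish against exponential volume growth unless one assumes something like $\int_M|{\bf H}|^2\,dv_g<\infty$ --- which is precisely the extra integrability hypothesis in Nakauchi--Urakawa that this theorem is designed to remove. Concretely, your scheme already fails for $N=\mathbb{H}^n$ (where $\alpha=0$): even a totally geodesic $M^m$ has $\vol(M\cap{\bf B}_\rho)\gtrsim e^{(m-1)\rho}$, yet the hyperbolic case is the headline corollary of the theorem.

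The paper's proof avoids volume considerations entirely and is pointwise. From the normal part of the biminimal equation one derives, using $\sum_i\langle B(A_{\bf H}e_i,e_i),{\bf H}\rangle=\sum_i|A_{\bf H}e_i|^2\geq m|{\bf H}|^4$ together with $K^N\leq 0$ and $\lambda\geq 0$, the differential inequality $\Delta|{\bf H}|^2\geq 2m\,|{\bf H}|^4$; this is exactly the term you discarded as ``lower order,'' and it is in fact the engine of the whole argument. One then runs a localized maximum principle \`a la Cheng--Yau on $f=(\rho^2-r(\p(\cdot))^2)^2|{\bf H}|^2$ over the compact set $M\cap\p^{-1}(\overline{{\bf B}_\rho})$ (compact by properness), estimating $\Delta_M\, r^2$ via the Hessian comparison for $\dist_N$ under the polynomial curvature bound, with Calabi's trick at the cut locus; the outcome is $f\leq c(1+\rho^2)^{\frac{\alpha+6}{4}}$, and letting $\rho\to\infty$ forces $|{\bf H}|\equiv 0$ precisely because $\alpha<2$. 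Note that the ``preliminary lemma'' you propose (boundedness of $|{\bf H}|$ by a maximum-principle argument) is, once carried out with the correct test function and the inequality $\Delta|{\bf H}|^2\geq 2m|{\bf H}|^4$, not a preliminary step but the entire proof --- it yields $|{\bf H}|\equiv 0$ directly, so the subsequent $\nabla^{\perp}{\bf H}=0$ rigidity discussion is unnecessary.
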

Since every biharmonic submanifold is free biminimal one, we obtain the following result.

\begin{cor}\label{main cor}
Let $(N,h)$ be a complete Riemannian manifold with non-positive sectional curvature. Assume that the sectional curvature $K^N$ has a 
polynomial growth bound of order less than $2$ from below.
Then, any biharmonic properly immersed submanifold in $N$ is minimal.
\end{cor}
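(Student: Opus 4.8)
My plan is the following. Since every biharmonic submanifold is free biminimal by Remark~\ref{biharmonic is biminimal} (i.e.\ biminimal with $\lambda=0\ge0$), Corollary~\ref{main cor} follows immediately from Theorem~\ref{main theorem}, so it suffices to prove the theorem, and for this I must show ${\bf H}\equiv0$ on $M$. The engine is an integral estimate: for any compactly supported Lipschitz function $\eta$ on $M$, take the $h$-inner product of the biminimal equation \eqref{biminimal eq} with $\eta^{2}{\bf H}$ and integrate over $M$ (the projection $[\cdot]^{\perp}$ disappears in the pairing because ${\bf H}$ is normal). Integrating by parts in the $\lapla^{\p}$-term, using the sign $K^{N}\le0$ to drop the non-negative curvature term $-\int_{M}\eta^{2}\sum_{i}\langle R^{N}({\bf H},d\p(e_{i}))d\p(e_{i}),{\bf H}\rangle\,dv_{g}\ge0$, using $\lambda\ge0$, and then a Cauchy--Schwarz/Young absorption, I obtain the fundamental inequality
\begin{equation*}
\tfrac12\int_{M}\eta^{2}\,|\nabla^{\p}{\bf H}|^{2}\,dv_{g}+\lambda\int_{M}\eta^{2}\,|{\bf H}|^{2}\,dv_{g}\ \le\ 2\int_{M}|\nabla\eta|^{2}\,|{\bf H}|^{2}\,dv_{g}.
\end{equation*}
A pointwise pairing of \eqref{biminimal eq} with ${\bf H}$, together with the identity $\lapla^{M}|{\bf H}|^{2}=2\langle\lapla^{\p}{\bf H},{\bf H}\rangle+2|\nabla^{\p}{\bf H}|^{2}$, shows in addition that $f:=|{\bf H}|^{2}$ is subharmonic on $M$, with $\lapla^{M}f\ge2|\nabla^{\p}{\bf H}|^{2}\ge0$.

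Next I would use that $\p$ is a proper immersion. Fix $q_{0}$ as in the hypothesis, set $\rho:=\dist_{N}(\cdot,q_{0})$, and for $r>0$ let $\eta_{r}:=\psi_{r}(\rho\circ\p)$ with $\psi_{r}\colon[0,\infty)\to[0,1]$ smooth, $\equiv1$ on $[0,r]$ and supported in $[0,2r]$. Properness makes $\eta_{r}$ compactly supported on $M$, and since $\p$ is isometric one has $|\nabla^{M}(\rho\circ\p)|\le1$, so $|\nabla\eta_{r}|\le C/r$ and the gradient is supported in the extrinsic annulus $A_{r}:=\p^{-1}(\overline{B^{N}_{2r}(q_{0})}\setminus B^{N}_{r}(q_{0}))$. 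Inserting $\eta_{r}$ into the fundamental inequality therefore reduces everything to controlling $r^{-2}\int_{B^{N}_{2r}(q_{0})\cap M}|{\bf H}|^{2}\,dv_{g}$.

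The heart of the matter, and the step I expect to be the main obstacle, is the a priori growth estimate
\begin{equation*}
\int_{B^{N}_{r}(q_{0})\cap M}|{\bf H}|^{2}\,dv_{g}\ \le\ C\,(1+r^{2}),\qquad r>0,
\end{equation*}
and this is exactly where the hypothesis $K^{N}\ge-L(1+\rho^{2})^{\alpha/2}$ with $\alpha<2$ is essential. By the Hessian comparison theorem this lower curvature bound gives $\mathrm{Hess}^{N}(\rho^{2})\le C(1+\rho^{1+\alpha/2})\,h$, and the restriction $\alpha<2$ forces the exponent $1+\tfrac{\alpha}{2}$ to stay below $2$, i.e.\ $\rho^{2}$ is "sub-quadratically subharmonic" on $N$; restricting along $M$ via $\lapla^{M}(\rho^{2}\circ\p)=\sum_{i}\mathrm{Hess}^{N}(\rho^{2})(d\p(e_{i}),d\p(e_{i}))+m\langle\nabla^{N}(\rho^{2}),{\bf H}\rangle$ and feeding this, together with \eqref{biminimal eq} and the subharmonicity of $|{\bf H}|^{2}$, into a further integration by parts against an extrinsic cutoff should yield the displayed bound, the condition $\alpha<2$ being precisely what makes the resulting dyadic summation converge. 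Granting this, I would run the fundamental inequality once more with a \emph{logarithmic} extrinsic cutoff ($\eta_{r}\equiv1$ on $B^{N}_{r}(q_{0})$, $\equiv0$ outside $B^{N}_{R}(q_{0})$, and $\eta_{r}=\log(R/\rho)/\log(R/r)$ in between, so that $|\nabla\eta_{r}|\le(\rho\log(R/r))^{-1}$): the right-hand side is then $\le(\log(R/r))^{-2}\int_{B^{N}_{R}(q_{0})\cap M}\rho^{-2}|{\bf H}|^{2}\,dv_{g}\le\frac{C(1+\log(R/r))}{\log^{2}(R/r)}\to0$ as $R\to\infty$, whence $\int_{B^{N}_{r}(q_{0})\cap M}|\nabla^{\p}{\bf H}|^{2}\,dv_{g}=0$ for every $r$, i.e.\ $\nabla^{\p}{\bf H}\equiv0$ on $M$.

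Finally, $\nabla^{\p}{\bf H}\equiv0$ forces ${\bf H}\equiv0$ by a short algebraic observation: for tangent $X$, the tangential component of $\nabla^{\p}_{X}{\bf H}=\nabla^{N}_{X}{\bf H}$ is, by the Weingarten formula, $-A_{\bf H}(X)$, where $A$ denotes the shape operator; hence $A_{\bf H}\equiv0$, and tracing gives $0=\mathrm{tr}\,A_{\bf H}=\sum_{i}\langle\,\mathrm{II}(e_{i},e_{i}),{\bf H}\,\rangle=\langle m{\bf H},{\bf H}\rangle=m|{\bf H}|^{2}$, so ${\bf H}\equiv0$ and $M$ is minimal. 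The only genuinely delicate point in this scheme is the a priori growth estimate above, which is where properness, the curvature-growth bound with $\alpha<2$, and the biminimal equation must be combined.
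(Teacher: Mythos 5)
Your reduction of the corollary to Theorem~\ref{main theorem} via Remark~\ref{biharmonic is biminimal} is exactly what the paper does for this statement, and that step is fine. The difficulty is that you then undertake to prove Theorem~\ref{main theorem} itself by an integral (Caccioppoli-type) method, and that proof has a genuine gap at precisely the step you flag as the main obstacle: the a priori growth estimate $\int_{M\cap\p^{-1}({\bf B}_R)}|{\bf H}|^2\,dv_g\le C(1+R^2)$. Properness of the immersion gives only that $M\cap\p^{-1}(\overline{{\bf B}_R})$ is compact; it gives no quantitative control on ${\rm Vol}(M\cap\p^{-1}({\bf B}_R))$ nor on any $L^p$ norm of ${\bf H}$ over extrinsic balls. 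The ingredients you propose to combine --- the Hessian comparison bound ${\rm Hess}^N(r^2)\le C(1+r^{1+\alpha/2})h$, the subharmonicity $\Delta|{\bf H}|^2\ge 2m|{\bf H}|^4\ge 0$, and your Caccioppoli inequality --- each produce estimates in which $\int_{{\bf B}_{2R}}|{\bf H}|^2$ or ${\rm Vol}(M\cap\p^{-1}({\bf B}_{2R}))$ reappears on the right-hand side, so the argument does not close without an additional hypothesis; this is exactly why results proved by this cutoff technique (e.g.\ Nakauchi--Urakawa) assume $\int_M|{\bf H}|^2\,dv_g<\infty$, or a Ricci lower bound on $M$ as in Section~\ref{Ein}. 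Without the growth estimate, your logarithmic-cutoff step, and hence the conclusion $\nabla^{\p}{\bf H}\equiv0$, does not follow. (The Caccioppoli inequality itself and the final implication $\nabla^{\p}{\bf H}\equiv0\Rightarrow A_{\bf H}\equiv0\Rightarrow m|{\bf H}|^2={\rm tr}\,A_{\bf H}=0$ are correct.)

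The paper's proof of Theorem~\ref{main theorem} avoids integration entirely: it establishes the pointwise inequality $\Delta|{\bf H}|^2\ge 2m|{\bf H}|^4$ (Lemma~\ref{key lemma}) and then runs a Cheng--Yau/Calabi-type localized maximum principle (Lemma~\ref{proper sub mfd minimal}) on the function $f_\rho=(\rho^2-r(\p(\cdot))^2)^2|{\bf H}|^2$ over the compact set $M\cap\p^{-1}(\overline{{\bf B}_\rho})$. Properness is used only to guarantee an interior maximum point $p$, the Hessian comparison under $K^N\ge -L(1+\rho^2)^{\alpha/2}$ yields $f_\rho(p)\le c(1+\rho^2)^{(\alpha+6)/4}$, and the hypothesis $\alpha<2$ is exactly what makes this $o(\rho^4)$, so that letting $\rho\to\infty$ forces $|{\bf H}|=0$ at any fixed point. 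If you want to salvage your scheme, you would either need to import that pointwise argument to obtain your $L^2$ growth estimate (at which point the integral machinery becomes redundant), or add an integrability assumption on ${\bf H}$ that the theorem does not make.
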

This result gives an affirmative partial answer to Conjecture \ref{glo gen Chen}.

\begin{rmk}
If $N$ is a complete Riemannian manifold whose non-positive sectional curvature is bounded from below $($including a hyperbolic space$),$
 then $N$ satisfies the assumption in Corollary $\ref{main cor}$.
 
 For the case of $\lambda<0$, the author constructed non-minimal biminimal submanifolds
 in $\mathbb{E}^n$ $($cf. \cite{sm7}$).$
\end{rmk}

The remaining sections are organized as follows. 
Section $\ref{Pre}$ contains some necessary definitions and preliminary geometric results.
 In section $\ref{EC}$, we prove our main theorem.
 In section $\ref{Ein}$, we show that any complete biharmonic submanifold
 with Ricci curvature bounded from below 
 in a Riemannian manifold with non-positive sectional curvature is minimal.
In section $\ref{hyp}$, we consider a non-negative biminimal hypersurface in a Riemannian manifold.

\qquad\\
\qquad\\

\section{\bf Preliminaries}\label{Pre} 

Let $\p:(M^m,g)\rightarrow (N^n,h=\met)$ be an isometric immersion from an $m$-dimensional Riemannian manifold into an $n$-dimensional Riemannian manifold.
In this case, we identify $d\p(X)$ with $X\in \frak{X}(M)$ for each $x\in M.$
We also denote by $\met$ the induced metric $\p^{-1}h$.

Then, the Gauss formula is given by
\begin{equation}
\nabla^N_XY=\nabla _XY+B(X,Y),\ \ \ \ X,Y\in \frak{X}(M),
\end{equation}
where $\nabla^N$ and $\nabla$ are the Levi-Civita connections on $N$ and $M$ respectively, and $B$ is the second fundamental form of $M$ in $N$.
The Weingarten formula is given by
\begin{equation}\label{Wformula}
\nabla^N_X \xi =-A_{\xi}X+\nabla^{\perp}_X{\xi},\ \ \ X\in \frak{X}(M),\  \xi \in \frak{X}(M)^{\perp},  
\end{equation}
where $A_{\xi}$ is the shape operator for a normal vector field $\xi$ on $M,$ and $\nabla^{\perp}$ denotes the normal connection of the normal bundle on $M$ in $N$.
It is well known that $B$ and $A$ are related by
\begin{equation}\label{BA rel}
\langle B(X,Y), \xi \rangle=\langle A_{\xi}X,Y \rangle.
\end{equation}

For any $x \in M$, 
let $\{e_1, \cdots, e_m, e_{m+1}, \cdots, e_n\}$ be an orthonormal basis of $N$ at $x$ 
such that $\{e_1, \cdots, e_m\}$ is an orthonormal basis of $T_xM$. 
Then, $B$ is decomposed as 
$$ 
B(X, Y) = \sum_{\alpha=m+1}^n B_{\alpha}(X, Y)e_{\alpha},~~{\rm at}~x. 
$$ 
The mean curvature vector field ${\bf H}$ of $M$ at $x$ is also given by 
$$ 
{\bf H}(x) = \frac{1}{m} \sum_{i = 1}^m B(e_i, e_i) =\sum_{\alpha=m+1}^n H_{\alpha}(x)e_{\alpha},\qquad 
H_{\alpha}(x) := \frac{1}{m} \sum_{i = 1}^m B_{\alpha}(e_i, e_i).  
$$ 

The necessary and sufficient condition for $M$ in $N$ 
to be biharmonic is the following (cf. \cite{smhu1}): 
\begin{align}
\ \ \Delta^{\perp} {\bf H} - \sum_{i=1}^m B(A_{\bf H}e_i, e_i) +\left[\sum^m_{i=1} R^N( {\bf H} , d\p(e_i))d\p(e_i)\right]^{\perp} = 0, \\ 
\ \ m~\nabla |{\bf H}|^2 + 4~{\rm trace}~A_{\nabla^{\perp} {\bf H}} +\left[\sum^m_{i=1}R^N({\bf H},d\p(e_i))d\p(e_i)\right]^T= 0,
\end{align} 
where $\Delta^{\perp}$ is the (non-positive) Laplace operator associated with the normal connection $\nabla^{\perp}$. 
Similarly, the necessary and sufficient condition for $M$ in $N$ to be biminimal is the following:
\begin{align}\label{N-S biminimal}
\ \ \Delta^{\perp} {\bf H} - \sum_{i=1}^m B(A_{\bf H}e_i, e_i) +\left[\sum^m_{i=1} R^N( {\bf H} , d\p(e_i))d\p(e_i)\right]^{\perp} = \lambda{\bf H}.
\end{align}

\qquad\\
\qquad\\


\section{Proof of main theorem}\label{EC} 
In this section, we prove Theorem \ref{main theorem}.
 We shall show the following lemma.

 \begin{lemm}\label{proper sub mfd minimal}
Let $(M,g)$ be a properly immersed submanifold in a complete Riemannian manifold $(N,h)$
 whose sectional curvature $K^N$ has a polynomial growth bound of order less than $2$ from below. 
Assume that there exists a positive constant $k > 0$ such that
 \begin{equation}\label{key assumption}
 \Delta |{\bf H}|^2\geq k |{\bf H}|^4 \ \ \text{on}\ \  M.
 \end{equation} 
 Then $M$ is minimal.
\end{lemm}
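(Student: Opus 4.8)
The plan is to run a Keller--Osserman type barrier argument: properness supplies a proper exhaustion function on $M$, the curvature hypothesis controls the Laplacian of a radial profile of it, and the differential inequality \eqref{key assumption} is of Keller--Osserman type, so a suitable supersolution forces $|{\bf H}|$ to vanish. Set $u:=|{\bf H}|^{2}\ge 0$; then \eqref{key assumption} reads $\Delta u\ge k u^{2}$, so in particular $u$ is subharmonic. If $M$ is compact, then $u$ is constant and $0\ge ku^{2}$ forces $u\equiv0$, so we may assume $M$ noncompact. Since $\p$ is proper and $N$ is complete, $\rho:=\dist_{N}(q_{0},\p(\cdot))$ is a proper exhaustion function on $M$: the sublevel sets $\Omega_{R}:=\{\rho<R\}$ are precompact, increase to $M$ as $R\to\infty$, and satisfy $\partial\Omega_{R}\subseteq\{\rho=R\}$; also $|\nabla\rho|\le 1$ along $M$.

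Next I would feed in the curvature bound. From $-L(1+\rho^{2})^{\alpha/2}\le K^{N}\le 0$, the Hessian comparison theorem in $N$ gives $0\le\mathrm{Hess}^{N}\rho\le C_{1}(1+\rho^{2})^{\alpha/4}(h-d\rho\otimes d\rho)$ with $C_{1}=C_{1}(L)$ (away from $q_{0}$; near $q_{0}$ the usual $\rho^{-1}$ blow-up is removed by replacing $\rho$ with a function equal to it outside a neighbourhood of $q_{0}$ and smoothly flattened inside). Pulling this back through the Gauss-formula identity $\Delta(f\circ\p)=\mathrm{trace}_{TM}\mathrm{Hess}^{N}f+m\la\nabla^{N}f,{\bf H}\ra$ (recall $\tau(\p)=m{\bf H}$) with $f=\rho$ yields, along $M$,
\begin{equation}\label{laplace rho bound}
0\le\mathrm{trace}_{TM}\mathrm{Hess}^{N}\rho\le mC_{1}(1+\rho^{2})^{\alpha/4},\qquad \Delta\rho\le mC_{1}(1+\rho^{2})^{\alpha/4}+m\,u^{1/2}.
\end{equation}
The decisive gain is that the exponent $\alpha/4$ is strictly less than $\tfrac12$ because $\alpha<2$.

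Fix $x_{0}\in M$. For $R>\rho(x_{0})$ large I would test $u$ against the barrier $\psi_{R}:=A(R)(R-\rho)^{-2}$ on $\Omega_{R}$, where $A(R):=C_{2}R^{1+\alpha/2}$ and $C_{2}$ is a large constant; the size $A(R)\asymp R^{1+\alpha/2}$ is dictated precisely by the term $(1+\rho^{2})^{\alpha/4}\psi_{R}'\lesssim R^{\alpha/2}\psi_{R}'$ below. A direct computation with \eqref{laplace rho bound} shows that, for $R$ large, $\psi_{R}$ satisfies on $\Omega_{R}$ both $m\psi_{R}'\le\tfrac12 k\psi_{R}^{3/2}$ and the strong supersolution inequality
\begin{equation}\label{supersol}
\psi_{R}''+mC_{1}(1+\rho^{2})^{\alpha/4}\,\psi_{R}'+m\,\psi_{R}^{1/2}\psi_{R}'\ \le\ k\,\psi_{R}^{2}.
\end{equation}
Since $\psi_{R}\to+\infty$ as $\rho\to R^{-}$, the function $u-\psi_{R}$ attains a positive interior maximum $y$ (the boundary contributing $-\infty$) whenever $\sup_{\Omega_{R}}(u-\psi_{R})>0$. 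At such $y$ one has $u(y)>\psi_{R}(y)$, and since $t\mapsto kt^{2}-m\psi_{R}'(y)t^{1/2}$ is strictly increasing for $t\ge\psi_{R}(y)$ (this uses $m\psi_{R}'\le\tfrac12 k\psi_{R}^{3/2}$), combining with \eqref{supersol} and \eqref{laplace rho bound} and $|\nabla\rho|\le1$,
\[
\Delta\psi_{R}(y)\le\psi_{R}''+\psi_{R}'\!\left(mC_{1}(1+\rho^{2})^{\alpha/4}+m\,u(y)^{1/2}\right)<k\,u(y)^{2}\le\Delta u(y),
\]
so $\Delta(u-\psi_{R})(y)>0$, contradicting that $y$ is a maximum. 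Hence $u\le\psi_{R}$ on $\Omega_{R}$, and in particular
\[
u(x_{0})\le\psi_{R}(x_{0})=\frac{C_{2}\,R^{1+\alpha/2}}{(R-\rho(x_{0}))^{2}}\ \longrightarrow\ 0\qquad(R\to\infty),
\]
because $1+\tfrac{\alpha}{2}<2$. As $x_{0}$ was arbitrary, ${\bf H}\equiv 0$ and $M$ is minimal.

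I expect the main obstacle to be the term $m\,u^{1/2}=m|{\bf H}|$ in the estimate \eqref{laplace rho bound} for $\Delta\rho$: on the scale relevant to the barrier it has exactly the same strength as the leading term $k\psi_{R}^{2}$ in \eqref{supersol}, and it cannot be discarded since ${\bf H}\equiv0$ is precisely what we want to prove. My resolution is to build this competition into the comparison principle via the monotonicity of $t\mapsto kt^{2}-m\psi_{R}'t^{1/2}$, rather than into an a priori bound on $|{\bf H}|$; an alternative route would be an integral Caccioppoli/Bochner argument, but it seems to require control on the volume growth of $M$, which properness alone does not provide. The remaining issues are routine: the non-smoothness of $\rho$ at $\p^{-1}(q_{0})$ and along $\p^{-1}(\mathrm{Cut}(q_{0}))$ (the former handled by flattening $\rho$ near $q_{0}$, the latter by Calabi's barrier trick, and vacuous when $N$ is Cartan--Hadamard), and the elementary bookkeeping that makes $A(R)\asymp R^{1+\alpha/2}$ admissible and makes $\psi_{R}(x_{0})\to 0$ exactly under the hypothesis $\alpha<2$.
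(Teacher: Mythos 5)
Your proposal is correct and follows essentially the same strategy as the paper's proof: properness to get compact sublevel sets of $r=\mathrm{dist}_N(q_0,\p(\cdot))$, the Hessian comparison theorem under the polynomial curvature bound (plus Calabi's trick at the cut locus), a maximum-principle argument on the ball of radius $\rho$, and absorption of the $m|{\bf H}|$ term coming from $\tau(\p)=m{\bf H}$ in the chain rule by exploiting the quadratic nonlinearity, with the same threshold $\alpha<2$ emerging from the final decay rate. The only difference is one of packaging — the paper maximizes the cutoff product $(\rho^2-r^2)^2|{\bf H}|^2$ and solves a quadratic inequality for its maximum value, whereas you compare $|{\bf H}|^2$ against the explicit supersolution $A(R)(R-r)^{-2}$ at an interior maximum of the difference — and both implementations are sound.
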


\begin{proof}
If $M$ is compact, applying the standard maximum principle to the elliptic inequality (\ref{key assumption}), 
we have that ${\bf H} = 0$ on $M$. 
Therefore, we may assume that $M$ is noncompact. 
Suppose that ${\bf H}(x_0) \ne 0$ at some point $x_0 \in M$. 
Then, we will lead a contradiction. 

Set 
$$ 
u(x) := |{\bf H}(x)|^2\quad {\rm for}\ \ x \in M. 
$$  
For each $\rho > 0$, consider the function 
$$ 
f(x) = f_{\rho}(x) := (\rho^2 - r(\p(x))^2)^2 u(x)\quad {\rm for}\ \ 
x \in M\cap \p^{-1}(\overline{{\bf B}_{\rho}}), 
$$ 
where $r(\p(x))={\rm dist}_N (\p(x),q_0)$ for some $q_0\in N$ and $\overline{{\bf B}_{\rho}}:=\{q\in N\left|r(q)\leq \rho\right.\}$ denote respectively the geodesic distance from $q_0$ and the closed geodesic ball of radius $\rho$ centered at $q_0$.
Then, there exists $\rho_0>0$ such that $x_0\in M \cap \p^{-1}({\bf B}_{\rho_0})$.
 For each $\rho\geq \rho_0$, $f$ is a non-negative function which is not identically zero on $M\cap \p^{-1}(\overline{\bf B}_{\rho})$.
 Take any $\rho \geq \rho_0$ and fix it.
  Since $M$ is properly immersed in $N$, $M\cap \p^{-1}(\overline{\bf B}_{\rho})$ is compact. 
 By this fact combined with $f=0$ on $M\cap\p^{-1}({\partial \overline {\bf B}_{\rho}})$, there exists a maximum point $p\in M\cap \p^{-1}({\bf B}_{\rho})$ of $f$ such that $f(p) > 0$.
 
 We consider the case that $\p(p)$ is not on the cut locus of $q_0$.
 We have $\nabla f= 0$ at $p$, and hence 
\begin{equation}\label{grad}
\frac{\nabla u(p)}{u(p)} = \frac{2~\nabla r(\p(p))^2}{\rho^2 - r(\p(p))^2}. 
\end{equation} 
We also have that $\Delta f \leq 0$ at $p$. 
Combining this with (\ref{grad}), we obtain 
\begin{equation}\label{Lap} 
\frac{\Delta u(p)}{u(p)} \leq 
\frac{6|\nabla r(\p(p))^2|^2}{(\rho^2-r(\p(p))^2)^2}
+\frac{2\Delta r(\p(p))^2}{\rho^2-r(\p(p))^2}.
\end{equation}   
By a direct computation, we have
\begin{equation}\label{est nab}
|\nabla r(\p(p))^2|^2\leq 4m r(\p(p))^2,
\end{equation}
and 
\begin{equation}\label{est Lap}
\begin{aligned}
\Delta r(\p(p))^2
=&~2\sum^m_{i=1}\langle (\nabla r) \p(p), d\p(e_i)\rangle^2\\
&+2 r (\p(p)) \sum^m_{i=1}D^2r(\p(p))(d\p(e_i),d\p(e_i))\\
&+2 r (\p(p)) \langle (\nabla r) (\p(p)),\tau(\p)(p)\rangle\\ 
\leq &~2m+2 r (\p(p))\sum^m_{i=1}D^2r(\p(p))(d\p(e_i),d\p(e_i))\\
&+2mr (\p(p)) |{\bf H}(p)|,
\end{aligned}
\end{equation}
where $D^2r$ denotes the {\em Hessian} of $r$.
Since the sectional curvature of $N$ satisfies
$K^N \geq -L(1+r^2)^{\frac{\alpha}{2}} \geq -L(1+\rho^2)^{\frac{\alpha}{2}}$ on $\overline {\bf B}_{\rho}$,
by an elementary argument,
 we obtain the following (cf. \cite{t.sakai1}),
\begin{equation}\label{est Hes}
\begin{aligned}
&\sum^m_{i=1}D^2r(\p(p))(d\p(e_i),d\p(e_i))\\
&\hspace{70pt}\leq m\sqrt{L(1+\rho^2)^{\frac{\alpha}{2}}}~ \text{coth} \left(\sqrt{L(1+\rho^2)^{\frac{\alpha}{2}}} r(\p(p))\right).
\end{aligned}
\end{equation}
 Combining $(\ref{est Lap})$ and $(\ref{est Hes})$, we have
\begin{equation}\label{est Lap2}
\begin{aligned}
\Delta r(\p(p))^2
\leq &~2m+2m\sqrt{L(1+\rho^2)^{\frac{\alpha}{2}}} r(\p(p)) \text{coth} \left(\sqrt{L(1+\rho^2)^{\frac{\alpha}{2}}} r(\p(p))\right) \\
&+2mr(\p(p)) |{\bf H}(p)|.
\end{aligned}
\end{equation}
It follows from $(\ref{key assumption})$, $(\ref{Lap})$, $(\ref{est nab})$ and $(\ref{est Lap2})$ that
 \begin{equation*}
 \begin{aligned}
ku(p)\leq &~ \frac{24mr(\p(p))^2}{(\rho^2-r(\p(p))^2)^2}\\
 &+\frac{4m+4m\sqrt{L(1+\rho^2)^{\frac{\alpha}{2}}} r(\p(p)) \text{coth} \left(\sqrt{L(1+\rho^2)^{\frac{\alpha}{2}}} r(\p(p))\right) +4mr(\p(p))|{\bf H}(p)|}{\rho^2-r(\p(p))^2},
 \end{aligned}
 \end{equation*}
   and hence
\begin{align*}
kf(p) \leq &~24mr(\p(p))^2+4m(\rho^2-r(\p(p))^2)\\
&+4m\sqrt{L(1+\rho^2)^{\frac{\alpha}{2}}} r(\p(p)) \text{coth} \left(\sqrt{L(1+\rho^2)^{\frac{\alpha}{2}}} r(\p(p))\right) (\rho^2-r(\p(p))^2)\\
&+4mr(\p(p))\sqrt{f(p)}.
\end{align*}
By an elementary argument, we only have to consider the following:
 there exists a positive constant $c$ depending only on $k,L$ and $m$ such that
$$f(p)\leq c (1+\rho^2)^{\frac{\alpha+6}{4}}.$$
Since $f(p)$ is the maximum of $f$, we have 
$$ 
f(x) \leq f(p) \leq c (1+\rho^2)^{\frac{\alpha+6}{4}} \quad
 {\rm for}\ \ x \in M \cap \p^{-1}\big{(} \overline{{\bf B}_{\rho}} \big{)}, 
$$ 
and hence 
\begin{equation}\label{Final} 
|{\bf H}(x)|^2 = u(x) \leq \frac{c (1+\rho^2)^{\frac{\alpha+6}{4}}}{(\rho^2 - r(\p(x))^2)^2}\quad 
{\rm for}\ \ x \in M \cap \p^{-1}\big{(} {\bf B}_{\rho} \big{)}, \quad {\rm and}\ \ \rho \geq \rho_0. 
\end{equation} 
Letting $\rho \nearrow \infty$ in (\ref{Final}) for $x = x_0$, 
we have that 
$$ 
|{\bf H}(x_0)|^2 = 0, 
$$  
because of the assumption $\alpha<2.$
This contradicts our assumption that ${\bf H}(x_0) \ne 0$.  
Therefore $M$ is minimal. 

If $\p(p)$ is on the cut locus of $q_0$, then we use a meted of Calabi (cf. \cite{Calabi}).
 Let $\sigma$ be a minimal geodesic joining $\p(p)$ and $q_0$.
 Then for any point $q'$ in the interior of $\sigma$, $q'$ is not conjugate to $q_0.$
 Fix such a point $q'$.
 Let $U_{q'}\subset {\bf B}_{\rho}$ be a conical neighborhood of the geodesic segment of $\sigma$ joining $q'$
 and $\p(p)$ such that, for any $\p(x)\in U_{q'}$, there is at most one minimizing geodesic joining $q'$ and $\p(x).$
 Let $\bar r(\p(x))={\rm dist}_{U_{q'}}(\p(x),q')$ in the manifold $U_{q'}.$
 Then we have ${\bar r}(\p(x))\geq {\rm dist}_{N}(\p(x),q')$, $r(\p(x))\leq r(q')+{\bar r}(\p(x))$, $r(q')+{\bar r}(\p(p))=r(\p(p))$ 
 and ${\bar r}$ is smooth in a neighborhood of $\p(p).$ We claim that the function
$$ 
{\bar f}(x) = {\bar f}_{\rho}(x) := (\rho^2 - \{r(q')+{\bar r}(\p(x))\}^2)^2 u(x)\quad {\rm for}\ \ 
x \in M\cap \p^{-1}(U_{q'})
$$ 
 also attains a local maximum at the point $p.$ In fact, for any point $x\in M\cap \p^{-1}(U_{q'}),$
 we have
 \begin{align*}
 {\bar f}(p)
 =&(\rho^2 - \{r(q')+{\bar r}(\p(p))\}^2)^2 u(p)\\
 =&(\rho^2 - r(\p(p))^2)^2 u(p)\\
 =&f(p)\geq f(x)\\
 =& (\rho^2 - r(\p(x))^2)^2 u(x)\\
 \geq & (\rho^2 - \{r(q')+{\bar r}(\p(x))\}^2)^2 u(x)\\
 =& {\bar f}(x).
 \end{align*}
Therefore the claim is proved and we can take the gradient and the Laplacian of the function ${\bar f}(x)$ at $p.$
The same argument as before then shows that 
\begin{align*}
k{\bar f}(p) \leq &~24mr(\p(p))^2+4m(\rho^2-r(\p(p))^2)\\
&+4m\sqrt{L(1+\rho^2)^{\frac{\alpha}{2}}} r(\p(p)) \text{coth} \left(\sqrt{L(1+\rho^2)^{\frac{\alpha}{2}}} {\bar r}(\p(p))\right) (\rho^2-r(\p(p))^2)\\
&+4mr(\p(p))\sqrt{{\bar f}(p)}.
\end{align*}
Take $q'\rightarrow q_0.$
By an elementary argument, we only have to consider the following:
 there exists a positive constant $c$ depending only on $k,L$ and $m$ such that
$$f(p)={\bar f}(p)\leq c (1+\rho^2)^{\frac{\alpha+6}{4}}.$$
The same argument as before then shows that $M$ is minimal. 
 \end{proof}

 By the same argument as in Lemma $\ref{proper sub mfd minimal}$,
 we also obtain the following results.

 \begin{prop}
Let $(M,g)$ be a properly immersed submanifold in a complete Riemannian manifold $(N,h)$ whose sectional curvature $K^N$ has a polynomial growth bound of order less than $2$ from below.
Assume that there exists a positive constant $k > 0$ such that
\begin{equation}\label{key3 assumption}
\Delta |B|^2 \geq k |B|^4 \quad {\rm on}\ \ M,
\end{equation} 
where $|B|$ is the norm of the second fundamental form.
Then $M$ is totally geodesic.
\end{prop}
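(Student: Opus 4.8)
The plan is to run the argument of Lemma~\ref{proper sub mfd minimal} almost verbatim, with the role of $u=|{\bf H}|^2$ now played by $u=|B|^2$. If $M$ is compact, the standard maximum principle applied to the elliptic inequality (\ref{key3 assumption}) forces $B\equiv 0$, so $M$ is totally geodesic; hence we may assume that $M$ is noncompact and, arguing by contradiction, that $B(x_0)\neq 0$ at some $x_0\in M$. For $\rho>0$ set $f_\rho(x):=(\rho^2-r(\p(x))^2)^2 u(x)$ on $M\cap\p^{-1}(\overline{\bf B}_\rho)$. Since $\p$ is proper this set is compact and $f_\rho$ vanishes on $M\cap\p^{-1}(\partial\overline{\bf B}_\rho)$, so for every sufficiently large $\rho$ there is an interior maximum point $p$ with $f_\rho(p)>0$.

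At such a point $\nabla f_\rho=0$ and $\Delta f_\rho\le 0$, which give precisely inequality (\ref{Lap}) with $u=|B|^2$; moreover the estimates (\ref{est nab}) for $|\nabla r(\p(p))^2|^2$ and (\ref{est Hes}) for $\sum_i D^2 r(\p(p))(d\p(e_i),d\p(e_i))$ are unchanged, as they depend only on the geometry of $N$, on the curvature bound, and on $m$. The only term in (\ref{est Lap}) that must be re-examined is the tension-field contribution $2r(\p(p))\langle(\nabla r)(\p(p)),\tau(\p)(p)\rangle$. Here one uses $|\tau(\p)|=m|{\bf H}|$ together with the pointwise bound $m^2|{\bf H}|^2=\big|\sum_{i=1}^m B(e_i,e_i)\big|^2\le m\sum_{i=1}^m |B(e_i,e_i)|^2\le m|B|^2$, that is, $m|{\bf H}|\le\sqrt{m}\,|B|=\sqrt{m}\,\sqrt{u}$. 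Consequently this term is still dominated by $2\sqrt{m}\,r(\p(p))\sqrt{u(p)}$, playing exactly the role that the $2mr(\p(p))|{\bf H}(p)|$ term played in Lemma~\ref{proper sub mfd minimal}.

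With this single substitution, combining (\ref{key3 assumption}), (\ref{Lap}), (\ref{est nab}) and the modified form of (\ref{est Lap2}) yields, after the same elementary manipulations, a bound $f_\rho(p)\le c\,(1+\rho^2)^{\frac{\alpha+6}{4}}$ with $c=c(k,L,m)$, and hence
$$
|B(x)|^2\le\frac{c\,(1+\rho^2)^{\frac{\alpha+6}{4}}}{(\rho^2-r(\p(x))^2)^2}\qquad\text{for }x\in M\cap\p^{-1}({\bf B}_\rho),\ \ \rho\ge\rho_0.
$$
Fixing $x=x_0$ and letting $\rho\nearrow\infty$, the hypothesis $\alpha<2$ forces $|B(x_0)|^2=0$, contradicting the choice of $x_0$. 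If $\p(p)$ lies on the cut locus of $q_0$, one inserts Calabi's trick exactly as in Lemma~\ref{proper sub mfd minimal}, replacing $r$ near $p$ by $r(q')+\bar r$, verifying that the corresponding $\bar f$ still has a local maximum at $p$, and then letting $q'\to q_0$. Therefore $B\equiv 0$ on $M$, i.e. $M$ is totally geodesic.

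There is essentially no new obstacle beyond Lemma~\ref{proper sub mfd minimal}: the only point requiring care is the observation that the mean-curvature (tension-field) term occurring in $\Delta r(\p(p))^2$ is still controlled by $\sqrt{m}\,|B|=\sqrt{u}$, which is exactly what makes the self-improving inequality $\Delta u\ge k u^2$ close up in the same way as before.
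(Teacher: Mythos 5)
Your proposal is correct and follows essentially the same route as the paper: the author's proof likewise just invokes the inequality $m|{\bf H}|^2\leq|B|^2$ and reruns the argument of Lemma~\ref{proper sub mfd minimal} with $u=|B|^2$, the only point of care being exactly the one you identify, namely that the tension-field term in $\Delta r(\p(p))^2$ is still controlled by a constant times $\sqrt{u}$.
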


\begin{proof}
In general, we have $m|{\bf H}|^2 \leq |B|^2$. 
 By using this inequality, the same argument as in Lemma $\ref{proper sub mfd minimal}$ shows the proposition.
\end{proof}

\begin{prop}
Let $(M,g)$ be a properly immersed submanifold in a complete Riemannian manifold $(N,h)$ whose sectional curvature $K^N$ has a polynomial growth bound of order less than $2$ from below.
Let $u$ be a smooth non-negative function on $M$. 
Assume that there exists a positive constant $k > 0$ such that
\begin{equation}\label{key3 assumption}
\Delta u \geq k u^2\quad {\rm on}\ \ M.
\end{equation} 
If the mean curvature is bounded from above by a constant $C$,
then $u = 0$ on $M$.
\end{prop}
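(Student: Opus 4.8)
The plan is to mimic the proof of Lemma~\ref{proper sub mfd minimal} almost verbatim, since the hypotheses here are a direct weakening: instead of $u=|{\bf H}|^2$ satisfying $\Delta u\geq k u^2$ with the mean curvature entering through $\tau(\p)=m{\bf H}$, we now have an abstract non-negative $u$ with $\Delta u\geq k u^2$, together with the {\em separate} assumption $|{\bf H}|\leq C$. First I would reduce to the noncompact case exactly as before (on a compact $M$ the maximum principle applied to $\Delta u\geq k u^2$ forces $u\equiv 0$), and then argue by contradiction, supposing $u(x_0)>0$ for some $x_0\in M$.

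Next I would introduce, for each $\rho>0$, the cutoff function $f=f_\rho(x):=(\rho^2-r(\p(x))^2)^2 u(x)$ on the compact set $M\cap\p^{-1}(\overline{{\bf B}_\rho})$, using properness to guarantee compactness, and pick an interior maximum point $p$ with $f(p)>0$ for all $\rho\geq\rho_0$. At $p$ one has $\nabla f=0$ and $\Delta f\leq 0$, giving (\ref{grad}) and (\ref{Lap}) with this new $u$. The estimates (\ref{est nab}) and (\ref{est Lap}) go through unchanged, except that in the last line of (\ref{est Lap}) the term $2r(\p(p))\langle(\nabla r)(\p(p)),\tau(\p)(p)\rangle$ is now bounded using $|\tau(\p)|=m|{\bf H}|\leq mC$ rather than reabsorbed; so it contributes $2mr(\p(p))C$ in place of $2mr(\p(p))|{\bf H}(p)|$. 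Likewise (\ref{est Hes}) and (\ref{est Lap2}) are identical. Feeding everything into $\Delta u(p)\geq k u(p)^2$ (used at $p$, which is legitimate since $(\rho^2-r^2)^2>0$ there), one obtains, after multiplying through by $(\rho^2-r(\p(p))^2)^2$, an inequality of the shape
\begin{align*}
k f(p)\leq &~24m r(\p(p))^2+4m(\rho^2-r(\p(p))^2)\\
&+4m\sqrt{L(1+\rho^2)^{\frac{\alpha}{2}}}\,r(\p(p))\,\text{coth}\!\left(\sqrt{L(1+\rho^2)^{\frac{\alpha}{2}}}\,r(\p(p))\right)(\rho^2-r(\p(p))^2)\\
&+4mC\,r(\p(p))(\rho^2-r(\p(p))^2).
\end{align*}
Every term on the right is $O((1+\rho^2)^{(\alpha+6)/4})$ up to a constant depending only on $k,L,m,C$ (note the new $C$-term is $O(\rho^3)$, which is dominated since $\alpha\geq 0$ forces $(\alpha+6)/4\geq 3/2$; actually it is $O(\rho^3)$ vs.\ $(1+\rho^2)^{(\alpha+6)/4}$, and for $\alpha\geq 0$ the latter exponent is $\geq 3/2$, so the $C$-term is comfortably absorbed). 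Hence $f(p)\leq c(1+\rho^2)^{(\alpha+6)/4}$, and since $f(p)=\max f$ we get $u(x_0)\leq c(1+\rho^2)^{(\alpha+6)/4}/(\rho^2-r(\p(x_0))^2)^2$; letting $\rho\nearrow\infty$ and using $\alpha<2$ (so $(\alpha+6)/4<2$) yields $u(x_0)=0$, a contradiction. The case where $\p(p)$ lies on the cut locus of $q_0$ is handled by Calabi's trick exactly as in Lemma~\ref{proper sub mfd minimal}, replacing $r$ by $r(q')+\bar r$ and then letting $q'\to q_0$.

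The only real point requiring care — and the one I would flag as the main (minor) obstacle — is checking that the new linear-in-$\rho$-times-$\rho^2$ term coming from the mean-curvature bound $C$, namely $O(\rho^3)$, is genuinely dominated by $(1+\rho^2)^{(\alpha+6)/4}$ as $\rho\to\infty$ for the full range $0\leq\alpha<2$; since $(\alpha+6)/4\in[3/2,2)$ this needs $3\leq 2\cdot(\alpha+6)/4$, i.e.\ $6\leq\alpha+6$, which always holds, so there is no loss. Everything else is word-for-word the argument already given, and I would simply write ``the same argument as in Lemma~\ref{proper sub mfd minimal}, with the estimate $|\tau(\p)|\leq mC$ used in place of the reabsorption of $2mr|{\bf H}|$, shows that $u=0$ on $M$.''
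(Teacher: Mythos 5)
Your proposal is correct and is exactly the argument the paper intends: the paper's proof is just the one-line remark that the same argument as in Lemma~\ref{proper sub mfd minimal} applies, and you have correctly identified the only point where that argument must be modified, namely that the term $2r(\p(p))\langle(\nabla r)(\p(p)),\tau(\p)(p)\rangle$ is now bounded by $2mCr(\p(p))$ using the hypothesis $|{\bf H}|\leq C$ instead of being reabsorbed as $4mr\sqrt{f(p)}$, and that the resulting $O(\rho^3)$ contribution is still dominated by $(1+\rho^2)^{(\alpha+6)/4}$ for all $0\leq\alpha<2$.
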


\begin{proof}
The same argument as in Lemma $\ref{proper sub mfd minimal}$ shows the proposition.
\end{proof}

From the equation of (\ref{N-S biminimal}), we obtain the following lemma.

 \begin{lemm} \label{key lemma}
Let $(M,g)$ be a non-negative biminimal submanifold $($that is, a biminimal submanifold with $\lambda \geq0$ $)$ in a Riemannian manifold with non-positive sectional curvature. 
Then, the following inequality for $|{\bf H}|^2$ holds 
\begin{equation}\label{key} 
\Delta |{\bf H}|^2 \geq 2m~|{\bf H}|^4. 
\end{equation} 
\end{lemm}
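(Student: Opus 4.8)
The plan is to derive the Bochner-type inequality \eqref{key} directly from the biminimal equation \eqref{N-S biminimal} by taking an inner product with $\mathbf H$ and controlling the three terms that appear. First I would compute $\tfrac12\Delta|\mathbf H|^2$. Writing $\Delta$ for the rough Laplacian acting on sections of the normal bundle, the Weitzenb\"ock identity gives
\begin{equation*}
\tfrac12\Delta|\mathbf H|^2 = \langle \Delta^{\perp}\mathbf H,\mathbf H\rangle + |\nabla^{\perp}\mathbf H|^2,
\end{equation*}
where I am using that $|\nabla^{\perp}\mathbf H|^2\ge 0$; strictly speaking one wants to be careful about whether $\Delta$ denotes the Laplace--Beltrami operator on the scalar function $|\mathbf H|^2$, but the inequality $\tfrac12\Delta|\mathbf H|^2\ge\langle\Delta^{\perp}\mathbf H,\mathbf H\rangle$ is exactly what is needed. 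Substituting the biminimal equation \eqref{N-S biminimal}, which reads $\Delta^{\perp}\mathbf H=\sum_i B(A_{\mathbf H}e_i,e_i)-\big[\sum_i R^N(\mathbf H,d\p(e_i))d\p(e_i)\big]^{\perp}+\lambda\mathbf H$, I obtain
\begin{equation*}
\tfrac12\Delta|\mathbf H|^2 \ge \sum_{i=1}^m\langle B(A_{\mathbf H}e_i,e_i),\mathbf H\rangle - \sum_{i=1}^m\langle R^N(\mathbf H,d\p(e_i))d\p(e_i),\mathbf H\rangle + \lambda|\mathbf H|^2.
\end{equation*}

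Next I would handle the three terms on the right. The curvature term: since $K^N\le 0$, the sectional-curvature expression $\langle R^N(\mathbf H,d\p(e_i))d\p(e_i),\mathbf H\rangle = K^N(\mathbf H,d\p(e_i))\,(|\mathbf H|^2|e_i|^2-\langle\mathbf H,e_i\rangle^2)\le 0$ because $\mathbf H\perp d\p(e_i)$ makes the Gram factor nonnegative; hence $-\sum_i\langle R^N(\mathbf H,d\p(e_i))d\p(e_i),\mathbf H\rangle\ge 0$ and this term is dropped. The $\lambda$ term: by hypothesis $\lambda\ge 0$, so $\lambda|\mathbf H|^2\ge 0$ and is also dropped. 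This is precisely where both non-positivity of $K^N$ and non-negativity of $\lambda$ enter, and it explains why the hypotheses of the lemma are exactly these two.

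The remaining term is the algebraic one $\sum_i\langle B(A_{\mathbf H}e_i,e_i),\mathbf H\rangle$, which by \eqref{BA rel} equals $\sum_i\langle A_{\mathbf H}e_i, A_{\mathbf H}e_i\rangle = \|A_{\mathbf H}\|^2 = \operatorname{trace}(A_{\mathbf H}^2)$. The key step — the one I expect to be the only real content — is the estimate $\|A_{\mathbf H}\|^2\ge m|\mathbf H|^4$. This follows from Cauchy--Schwarz applied to the symmetric operator $A_{\mathbf H}$: $(\operatorname{trace} A_{\mathbf H})^2\le m\operatorname{trace}(A_{\mathbf H}^2)$, combined with the identity $\operatorname{trace} A_{\mathbf H} = \sum_i\langle B(e_i,e_i),\mathbf H\rangle = m\langle\mathbf H,\mathbf H\rangle = m|\mathbf H|^2$. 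Hence $\operatorname{trace}(A_{\mathbf H}^2)\ge \tfrac1m(m|\mathbf H|^2)^2 = m|\mathbf H|^4$. Putting this together yields $\tfrac12\Delta|\mathbf H|^2\ge m|\mathbf H|^4$, i.e. $\Delta|\mathbf H|^2\ge 2m|\mathbf H|^4$, which is \eqref{key}. The main obstacle is really just bookkeeping: making sure the Weitzenb\"ock step is stated for the scalar Laplacian correctly (the inequality direction, dropping $|\nabla^\perp\mathbf H|^2\ge0$) and that the sign conventions for $R^N$ and for $\Delta^\perp$ match those fixed in Section~\ref{Pre}; once the signs are pinned down, every term lands on the correct side and no further work is needed.
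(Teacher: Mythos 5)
Your argument is correct and follows the paper's proof essentially verbatim: same Weitzenböck identity for $\Delta|{\bf H}|^2$, same substitution of \eqref{N-S biminimal}, same discarding of the curvature and $\lambda$ terms via $K^N\le 0$ and $\lambda\ge 0$, and the same Cauchy--Schwarz estimate at the end (the paper phrases it as $|B_n|_g^2\ge m H_n^2$ after normalizing $e_n={\bf H}/|{\bf H}|$, which is identical to your $\operatorname{trace}(A_{\bf H}^2)\ge\frac1m(\operatorname{trace}A_{\bf H})^2$). No gaps.
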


\begin{proof} 
The equation of (\ref{N-S biminimal}) implies that, at each $x \in M$, 
\begin{equation}\label{ell-ineq}
\begin{aligned} 
\Delta |{\bf H}|^2 
 =&~2~\sum_{i=1}^m \langle \nabla_{e_i}^{\perp} {\bf H}, \nabla_{e_i}^{\perp} {\bf H} \rangle
+ 2~\langle \Delta^{\perp} {\bf H}, {\bf H} \rangle  \\ 
 =&~2~\sum_{i=1}^m \langle \nabla_{e_i}^{\perp} {\bf H}, \nabla_{e_i}^{\perp} {\bf H} \rangle 
+2~\sum_{i=1}^m \langle B(A_{\bf H} e_i, e_i), {\bf H} \rangle\\
&-2~\left\langle \sum^m_{i=1} R^N ( {\bf H} , d\p(e_i) ) d\p(e_i), {\bf H} \right \rangle
+ 2~\lambda \langle {\bf H}, {\bf H} \rangle  \\  
 \geq &~2~\sum_{i=1}^m \langle A_{\bf H} e_i, A_{\bf H} e_i \rangle .
\end{aligned}
\end{equation} 
The last inequality follows from $K^N\leq 0$, $\lambda \geq0$ 
 and $(\ref{BA rel})$.
When ${\bf H}(x) \ne 0$, set $e_n := \frac{{\bf H}(x)}{|{\bf H}(x)|}$. 
Then, ${\bf H}(x) = H_n(x) e_n$ and $|{\bf H}(x)|^2 = H_n(x)^2$. 
From (\ref{ell-ineq}), we have at $x$ 
\begin{align} 
\Delta |{\bf H}|^2 
& \geq 2~H_n^2~\sum_{i=1}^m \langle A_{e_n} e_i, A_{e_n} e_i \rangle\notag \\ 
& = 2~|{\bf H}|^2~|B_n|_g^2 \\
& \geq 2m~|{\bf H}|^4 \notag. 
\end{align} 
Even when ${\bf H}(x) = 0$, the above inequality~(\ref{key}) still holds at $x$. 
This completes the proof. 
\end{proof} 

From the inequality $(\ref{key})$, we obtain the following propositions.

\begin{prop}
Let $(M,g)$ be a compact non-negative biminimal submanifold in a Riemannian manifold with non-positive sectional curvature. 
Then, $M$ is minimal.
\end{prop}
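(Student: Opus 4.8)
The plan is to invoke the key differential inequality already established in Lemma~\ref{key lemma}, namely $\Delta |{\bf H}|^2 \geq 2m\,|{\bf H}|^4$ on $M$, and then apply the classical maximum principle on a compact manifold. First I would set $u := |{\bf H}|^2$, a smooth non-negative function on $M$, which satisfies $\Delta u \geq 2m\,u^2 \geq 0$, so $u$ is subharmonic. Since $M$ is compact (and without boundary, as an immersed submanifold), $u$ attains its maximum at some point $p \in M$. At an interior maximum we have $\Delta u(p) \leq 0$, hence $2m\,u(p)^2 \leq \Delta u(p) \leq 0$, forcing $u(p) = 0$. Because $p$ is a maximum point, $u \equiv 0$ on $M$, i.e. ${\bf H} \equiv 0$, so $M$ is minimal.

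Alternatively, and perhaps more cleanly, one can integrate: $\int_M \Delta u \, dv_g = 0$ on a closed manifold by the divergence theorem, while $\int_M \Delta u \, dv_g \geq 2m \int_M u^2 \, dv_g \geq 0$, which immediately gives $\int_M u^2 \, dv_g = 0$ and hence $u \equiv 0$. This second route avoids any appeal to the strong maximum principle and only uses that $u$ is smooth and $M$ is closed. Either argument is entirely elementary once Lemma~\ref{key lemma} is in hand; the substantive content of the result lives in that lemma, whose proof uses the biminimality equation~(\ref{N-S biminimal}), the sign condition $\lambda \geq 0$, and the curvature hypothesis $K^N \leq 0$.

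I do not expect any real obstacle here: the compactness of $M$ replaces the elaborate cut-off function and exhaustion argument used in Lemma~\ref{proper sub mfd minimal}, so no control on the curvature growth of $N$ beyond $K^N \leq 0$ is needed, and no properness assumption is required. The only point to be slightly careful about is that "compact" should be understood as closed (compact without boundary), which is the standard convention for compact submanifolds; if one wished to allow boundary, one would additionally need a boundary condition, but that is outside the stated hypotheses. So the proof is essentially one line: apply the maximum principle (or integrate) to the inequality~(\ref{key}).
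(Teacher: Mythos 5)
Your proof is correct and matches the paper's: the paper likewise applies the standard maximum principle on the compact manifold to the inequality $\Delta |{\bf H}|^2 \geq 2m|{\bf H}|^4$ from Lemma~\ref{key lemma} to conclude ${\bf H}\equiv 0$. Your alternative integration argument is an equally valid, equally elementary variant of the same idea.
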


\begin{proof}
 Applying the standard maximum principle to the elliptic inequality (\ref{key}), 
we have that ${\bf H} = 0$ on $M$. 
\end{proof}

\begin{prop}
Let $(M,g)$ be a non-negative biminimal submanifold in a Riemannian manifold with non-positive sectional curvature. 
If the mean curvature is constant, then $M$ is minimal.
\end{prop}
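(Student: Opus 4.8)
The plan is to read off the conclusion directly from Lemma~\ref{key lemma}. Since $M$ is a non-negative biminimal submanifold (that is, biminimal with $\lambda \geq 0$) in a Riemannian manifold of non-positive sectional curvature, the inequality $(\ref{key})$ applies and gives
\begin{equation*}
\Delta |{\bf H}|^2 \geq 2m\,|{\bf H}|^4 \quad {\rm on}\ \ M,
\end{equation*}
with no assumption of completeness, properness, or any curvature growth bound on $N$.

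Next I would bring in the constant mean curvature hypothesis. If $|{\bf H}|$ is constant on $M$, then $|{\bf H}|^2$ is constant as well, so $\Delta |{\bf H}|^2 \equiv 0$. Substituting this into the displayed inequality yields $0 \geq 2m\,|{\bf H}|^4$ at every point of $M$, hence $|{\bf H}|^4 \leq 0$ and therefore $|{\bf H}| \equiv 0$. Thus $M$ is minimal.

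There is essentially no obstacle here: once Lemma~\ref{key lemma} is available, the argument is purely algebraic, and the role of the constant-mean-curvature assumption is precisely to annihilate the Laplacian term $\Delta |{\bf H}|^2$. It is worth emphasising that, in contrast with Theorem~\ref{main theorem} and Lemma~\ref{proper sub mfd minimal}, this proposition requires neither the polynomial growth bound on $K^N$ nor the properness of the immersion; the only structural input is the pointwise differential inequality coming from the biminimal equation $(\ref{N-S biminimal})$ together with $K^N \leq 0$ and $\lambda \geq 0$.
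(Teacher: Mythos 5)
Your proof is correct and is exactly the paper's argument: the constant mean curvature hypothesis forces $\Delta|{\bf H}|^2\equiv 0$, which combined with the inequality $\Delta|{\bf H}|^2\geq 2m|{\bf H}|^4$ from Lemma~\ref{key lemma} gives $|{\bf H}|\equiv 0$. Nothing further is needed.
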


\begin{proof}
Since $\Delta |{\bf H}|^2=0$, by using $(\ref{key})$, we obtain the proposition.  
\end{proof}

We shall show our main theorem (cf. Theorem $\ref{main theorem}$).

\begin{proof}[Proof of Theorem $\ref{main theorem}$]\quad
By using Lemma $\ref{key lemma}$, we obtain the inequality $(\ref{key})$.
Therefore, by using Lemma $\ref{proper sub mfd minimal}$, we obtain Theorem $\ref{main theorem}.$ 
\end{proof}

\qquad\\
\qquad\\

\section{Another result for Conjecture $\ref{glo gen Chen}$}\label{Ein}
In this section, we show that any complete biharmonic submanifold with Ricci curvature
 bounded from below in a Riemannian manifold with non-positive sectional curvature is minimal.

We recall the generalized maximum principle developed in Cheng-Yau \cite{Cheng-Yau}.
\begin{lemm}[\cite{Cheng-Yau}]\label{Cheng-Yau}
Let $(M, g)$ be a complete Riemannian manifold
whose Ricci curvature is bounded from below. 
Let $u$ be a smooth non-negative function on $M$. 
Assume that there exists a positive constant $k > 0$ such that
\begin{equation}\label{GMP}
\Delta u \geq k u^2\quad {\rm on}\ \ M.
\end{equation} 
Then, $u = 0$ on $M$.
\end{lemm}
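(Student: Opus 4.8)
The plan is to fix an arbitrary $x_0\in M$ and show $u(x_0)=0$ by a cut-off argument on intrinsic geodesic balls, in close analogy with the proof of Lemma~\ref{proper sub mfd minimal}, but using the completeness of $M$ in place of properness and the intrinsic distance in place of the ambient one. Write $n:=\dim M$ and $r(x):=\dist_M(x,x_0)$. Since $M$ is complete, Hopf--Rinow ensures that each closed geodesic ball $\overline{B_\rho}:=\{r\le\rho\}$ is compact. For $\rho>0$ I would introduce the test function
\[
f_\rho(x):=(\rho^2-r(x)^2)^2\,u(x),\qquad x\in\overline{B_\rho},
\]
which is non-negative and vanishes on $\partial\overline{B_\rho}$, hence attains its maximum at some interior point $p=p_\rho\in B_\rho$. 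If $u\equiv0$ there is nothing to prove, so I argue by contradiction and suppose $u(x_0)>0$; then $f_\rho(p)\ge f_\rho(x_0)>0$ for all large $\rho$, and in particular $u(p)>0$.

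Assume first that $p$ is not on the cut locus of $x_0$, so $r^2$ is smooth near $p$. The condition $\nabla f_\rho(p)=0$ gives $\nabla u/u=2\nabla r^2/(\rho^2-r^2)$ at $p$, and $\Delta f_\rho(p)\le0$ together with this relation yields, exactly as in (\ref{Lap}),
\[
\frac{\Delta u(p)}{u(p)}\le\frac{6\,|\nabla r^2|^2}{(\rho^2-r^2)^2}+\frac{2\,\Delta r^2}{\rho^2-r^2}.
\]
Inserting the hypothesis $\Delta u\ge k u^2$, i.e.\ $\Delta u(p)/u(p)\ge k\,u(p)$, and multiplying by $(\rho^2-r^2)^2$ converts this into an upper bound for $f_\rho(p)$. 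To control the right-hand side I would use $|\nabla r|=1$, so that $|\nabla r^2|^2=4r^2$ and $\Delta r^2=2+2r\,\Delta r$, and then invoke the Laplacian comparison theorem: if $\mathrm{Ric}^M\ge-(n-1)K$ for a constant $K\ge0$, then off the cut locus $\Delta r\le(n-1)\sqrt{K}\coth(\sqrt{K}\,r)$, hence $\Delta r^2\le 2+2(n-1)\sqrt{K}\,r\coth(\sqrt{K}\,r)$.

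Collecting these bounds produces an inequality of the form $k\,f_\rho(p)\le C\rho^3$ for $\rho\ge1$, with $C$ depending only on $k$, $n$ and $K$; here the coefficient $r\coth(\sqrt{K}\,r)$ grows at most linearly in $r\le\rho$, since $\coth$ decreases to $1$ on $(0,\infty)$ while $r\coth(\sqrt{K}\,r)\to 1/\sqrt{K}$ as $r\to0$. Because $f_\rho(p)$ is the maximum, $(\rho^2-r(x_0)^2)^2 u(x_0)=f_\rho(x_0)\le f_\rho(p)\le C\rho^3$, and letting $\rho\nearrow\infty$ with $x_0$ fixed forces $u(x_0)=0$, contradicting $u(x_0)>0$. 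As $x_0$ was arbitrary, $u\equiv0$ on $M$.

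The one genuine obstacle is that the maximum point $p$ may lie on the cut locus of $x_0$, where $r$ is not smooth; this is precisely the difficulty handled by Calabi's trick in Lemma~\ref{proper sub mfd minimal}, and I would import that device verbatim. Choosing $q'$ in the interior of a minimizing geodesic from $x_0$ to $p$, one replaces $r$ by the upper barrier $r(q')+\bar r$ with $\bar r:=\dist(\cdot,q')$ smooth near $p$, checks that the corresponding $\bar f_\rho$ still attains a local maximum at $p$, runs the smooth computation for $\bar f_\rho$, and finally lets $q'\to x_0$ to recover the same estimate. Apart from this point the proof is a routine maximum-principle calculation; the essential structural input is simply that the lower Ricci bound feeds the Laplacian comparison theorem, which supplies exactly the $\Delta r^2$ estimate that the cut-off scheme requires. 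Alternatively, the same conclusion follows from the Omori--Yau maximum principle, whose validity rests on the identical Ricci hypothesis.
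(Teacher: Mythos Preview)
The paper does not prove this lemma; it simply quotes it from \cite{Cheng-Yau} and uses it as a black box. Your proposal therefore supplies a proof where the paper offers none, and the argument you give is correct: it is the standard Cheng--Yau cut-off scheme, essentially the intrinsic analogue of the paper's own Lemma~\ref{proper sub mfd minimal}, with the Ricci lower bound feeding the Laplacian comparison estimate for $\Delta r$ in place of the extrinsic Hessian bound used there, and with Calabi's trick handling cut-locus issues exactly as before. The growth count is also right: since $r(x_0)=0$, one gets $\rho^4 u(x_0)=f_\rho(x_0)\le f_\rho(p)\le C\rho^3$, forcing $u(x_0)=0$.
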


By using Lemma $\ref{Cheng-Yau}$ and the inequality $(\ref{key})$, we obtain the following proposition.

\begin{prop}\label{Ric below}
Let $(M,g)$ be a complete non-negative biminimal submanifold in a Riemannian manifold with
 non-positive sectional curvature.
 If the Ricci curvature of $M$ is bounded from below,
 then $M$ is minimal.
 \end{prop}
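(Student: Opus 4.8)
The plan is to combine the differential inequality for $|\mathbf H|^2$ coming from the biminimal equation with the Cheng--Yau generalized maximum principle. Specifically, by Lemma~\ref{key lemma}, any non-negative biminimal submanifold $M$ in a Riemannian manifold of non-positive sectional curvature satisfies
\begin{equation*}
\Delta |\mathbf H|^2 \geq 2m\,|\mathbf H|^4 \quad \text{on } M.
\end{equation*}
Set $u := |\mathbf H|^2$. Then $u$ is a smooth non-negative function on $M$ satisfying $\Delta u \geq k\,u^2$ with $k = 2m > 0$.

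Next I would invoke the hypotheses of the proposition: $M$ is complete and its Ricci curvature is bounded from below. These are precisely the assumptions required by Lemma~\ref{Cheng-Yau} (Cheng--Yau). Applying that lemma to the function $u = |\mathbf H|^2$ with $k = 2m$, we conclude that $u \equiv 0$ on $M$, i.e.\ $\mathbf H = 0$ everywhere, so $M$ is minimal. This completes the argument.

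I do not expect a serious obstacle here, since the proposition is essentially a direct consequence of the two preceding lemmas: Lemma~\ref{key lemma} supplies the differential inequality and Lemma~\ref{Cheng-Yau} supplies the analytic conclusion. The only point worth checking carefully is that the regularity and non-negativity of $u = |\mathbf H|^2$ are genuinely enough to apply Cheng--Yau (they are: $\mathbf H$ is smooth because the immersion is smooth, hence $|\mathbf H|^2$ is smooth and clearly non-negative), and that no completeness-versus-properness mismatch arises --- here we use completeness of $M$ directly, so there is nothing to reconcile. Thus the proof is just the two-line chain: biminimal $+$ $K^N \leq 0$ $\Rightarrow$ $\Delta |\mathbf H|^2 \geq 2m|\mathbf H|^4$, then complete $+$ Ricci bounded below $\Rightarrow$ $|\mathbf H|^2 \equiv 0$.
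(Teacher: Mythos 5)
Your proof is correct and follows exactly the route the paper takes: Lemma~\ref{key lemma} gives $\Delta|\mathbf H|^2\geq 2m|\mathbf H|^4$, and the Cheng--Yau generalized maximum principle (Lemma~\ref{Cheng-Yau}) applied to $u=|\mathbf H|^2$ under the completeness and Ricci lower bound hypotheses yields $\mathbf H\equiv 0$. Nothing further is needed.
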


Proposition \ref{Ric below} implies the following result.

\begin{cor}\label{Ric below cor}
Let $(M,g)$ be a complete biharmonic submanifold in a Riemannian manifold with
 non-positive sectional curvature.
 If the Ricci curvature of $M$ is bounded from below, 
 then $M$ is minimal.

\end{cor}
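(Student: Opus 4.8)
The plan is to deduce Corollary~\ref{Ric below cor} directly from Proposition~\ref{Ric below} by invoking Remark~\ref{biharmonic is biminimal}. Recall that every biharmonic submanifold is free biminimal, i.e.\ biminimal with $\lambda=0$, and in particular it is \emph{non-negative} biminimal since $0\geq 0$. Thus a complete biharmonic submanifold $(M,g)$ in a Riemannian manifold with non-positive sectional curvature is a complete non-negative biminimal submanifold in that ambient manifold, and if in addition the Ricci curvature of $M$ is bounded from below, all the hypotheses of Proposition~\ref{Ric below} are satisfied.

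Concretely, the proof is a single short implication: given the biharmonic immersion $\p:(M,g)\to(N,h)$ with $K^N\leq 0$ and $\mathrm{Ric}^M$ bounded below, apply Proposition~\ref{Ric below} with $\lambda=0$ to conclude that $M$ is minimal. For completeness one could unwind what Proposition~\ref{Ric below} rests on: Lemma~\ref{key lemma} gives the elliptic inequality $\Delta|{\bf H}|^2\geq 2m\,|{\bf H}|^4$ on $M$ (here $\lambda=0\geq0$ is exactly the sign condition used in (\ref{ell-ineq})), and then Lemma~\ref{Cheng-Yau}, the Cheng--Yau generalized maximum principle, applied to the non-negative function $u=|{\bf H}|^2$ with $k=2m$, forces $u\equiv 0$, i.e.\ ${\bf H}\equiv 0$.

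There is essentially no obstacle here: the only thing to check is that the logical chain is clean, namely that ``biharmonic $\Rightarrow$ free biminimal $\Rightarrow$ non-negative biminimal'' is legitimate, which is immediate from the definitions. The substantive work has already been done in establishing Lemma~\ref{key lemma} (the curvature sign feeding into the Bochner-type inequality) and in the Cheng--Yau estimate quoted as Lemma~\ref{Cheng-Yau}; Proposition~\ref{Ric below} packages these, and the corollary is just the specialization $\lambda=0$. So the proof proposal is simply:

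\begin{proof}[Proof of Corollary~\ref{Ric below cor}]
By Remark~\ref{biharmonic is biminimal}, every biharmonic submanifold is free biminimal, i.e.\ biminimal with $\lambda=0$; in particular it is a non-negative biminimal submanifold. Hence a complete biharmonic submanifold $(M,g)$ in a Riemannian manifold with non-positive sectional curvature and with $\mathrm{Ric}^M$ bounded from below satisfies all the hypotheses of Proposition~\ref{Ric below}. Applying Proposition~\ref{Ric below}, we conclude that $M$ is minimal.
\end{proof}
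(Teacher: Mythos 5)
Your proposal is correct and matches the paper exactly: the paper also deduces this corollary from Proposition~\ref{Ric below} via the observation that a biharmonic submanifold is free biminimal ($\lambda=0$), hence non-negative biminimal. Nothing further is needed.
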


This result gives an affirmative partial answer to Conjecture \ref{glo gen Chen}.

\qquad\\
\qquad\\

\section{Non-negative biminimal hypersurfaces in  Riemannian manifolds}\label{hyp}

In this section, we consider a non-negative biminimal hypersurface $(M^m,g)$
 in a Riemannian manifold $(N^{m+1},h)$.
In this case, we can denote that ${\bf H}=H\xi$, where $H$ and $\xi$ are the mean curvature and
 a unit normal vector field along $\p$ respectively.
We have the following lemma.

\begin{lemm} \label{key 2 hyper}
Let $(M,g)$ be a non-negative biminimal hypersurface $($that is, a biminimal hypersurface with $\lambda \geq0$ $)$ in a Riemannian manifold with non-positive Ricci curvature. 
Then, the following inequality for $|{\bf H}|^2$ holds 
\begin{equation}\label{key 2} 
\Delta |{\bf H}|^2 \geq 2m~|{\bf H}|^4. 
\end{equation} 
\end{lemm}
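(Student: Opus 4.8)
The plan is to mimic the proof of Lemma~\ref{key lemma}, the only difference being that the hypersurface setting lets us replace the sectional curvature hypothesis by the weaker hypothesis on Ricci curvature. First I would start from the necessary and sufficient condition~(\ref{N-S biminimal}) for biminimality and compute $\Delta|{\bf H}|^2$ exactly as in~(\ref{ell-ineq}): expanding the Laplacian of $|{\bf H}|^2=\langle{\bf H},{\bf H}\rangle$ via the Bochner-type identity gives
\begin{equation*}
\Delta|{\bf H}|^2 = 2\sum_{i=1}^m\langle\nabla^{\perp}_{e_i}{\bf H},\nabla^{\perp}_{e_i}{\bf H}\rangle + 2\langle\Delta^{\perp}{\bf H},{\bf H}\rangle,
\end{equation*}
and then substituting $\Delta^{\perp}{\bf H}$ from~(\ref{N-S biminimal}) produces a curvature term, a term $2\sum_i\langle B(A_{\bf H}e_i,e_i),{\bf H}\rangle = 2\sum_i\langle A_{\bf H}e_i,A_{\bf H}e_i\rangle$ by~(\ref{BA rel}), and the term $2\lambda|{\bf H}|^2\geq0$.

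The key observation in the hypersurface case is that the curvature term is exactly a Ricci curvature. Writing ${\bf H}=H\xi$ with $\xi$ a local unit normal, we have
\begin{equation*}
\left\langle\sum_{i=1}^m R^N({\bf H},d\p(e_i))d\p(e_i),{\bf H}\right\rangle = H^2\sum_{i=1}^m\langle R^N(\xi,e_i)e_i,\xi\rangle = H^2\,\mathrm{Ric}^N(\xi,\xi),
\end{equation*}
since $\{e_1,\dots,e_m,\xi\}$ is an orthonormal basis of $T_{\p(x)}N$ and the missing diagonal term $\langle R^N(\xi,\xi)\xi,\xi\rangle$ vanishes. Hence the hypothesis $\mathrm{Ric}^N\leq0$ makes this term $\leq0$, so it contributes with the favorable sign when moved to the other side, and we obtain $\Delta|{\bf H}|^2\geq 2\sum_{i=1}^m\langle A_{\bf H}e_i,A_{\bf H}e_i\rangle$.

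Finally I would run the same pointwise estimate as at the end of Lemma~\ref{key lemma}: at a point where ${\bf H}(x)\neq0$ set $e_n:=\xi={\bf H}(x)/|{\bf H}(x)|$, so that $A_{\bf H}=H_n A_{e_n}$ and $\sum_i\langle A_{\bf H}e_i,A_{\bf H}e_i\rangle = H_n^2|A_{e_n}|_g^2 = |{\bf H}|^2|B_n|_g^2$. Since for a hypersurface $|B_n|_g^2 = |B|_g^2 \geq m H_n^2 = m|{\bf H}|^2$ by Cauchy--Schwarz applied to the trace, we get $\Delta|{\bf H}|^2\geq 2m|{\bf H}|^4$ at $x$; at points where ${\bf H}(x)=0$ the inequality holds trivially since the right-hand side vanishes and the left-hand side, being $\Delta$ of a nonnegative function at one of its minima reached there only if $u\equiv$ small, is in any case $\geq0$ by the first displayed inequality above. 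I do not anticipate a genuine obstacle here; the only point requiring a little care is the bookkeeping that turns the sectional-curvature sum into $\mathrm{Ric}^N(\xi,\xi)$ and the verification that the argument of Lemma~\ref{key lemma} only ever used $\langle\sum_i R^N({\bf H},d\p(e_i))d\p(e_i),{\bf H}\rangle\leq0$, which in codimension one is implied by $\mathrm{Ric}^N\leq0$ rather than the full $K^N\leq0$.
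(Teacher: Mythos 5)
Your proposal is correct and follows exactly the paper's route: the paper's proof of this lemma consists precisely of the observation that $\sum_{i=1}^m\langle R^N({\bf H},d\p(e_i))d\p(e_i),{\bf H}\rangle=H^2\,\mathrm{Ric}^N(\xi,\xi)\leq 0$ in codimension one, after which the computation of Lemma~\ref{key lemma} goes through verbatim. The only cosmetic remark is that your justification of the case ${\bf H}(x)=0$ is phrased awkwardly; the clean statement is simply that $\Delta|{\bf H}|^2\geq 2\sum_i\langle A_{\bf H}e_i,A_{\bf H}e_i\rangle\geq 0$ holds everywhere, which equals the (vanishing) right-hand side at such points.
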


\begin{proof} 
Since $\sum^m_{i=1}\langle R^N({\bf H},d\p(e_i))d\p(e_i),{\bf H}\rangle=H^2{\rm Ric}^N(\xi,\xi)\leq 0,$ the same argument as in Lemma \ref{key lemma} shows this lemma.
\end{proof} 

By using this lemma, we obtain the following results.

\begin{prop}
Let $(M,g)$ be a compact non-negative biminimal hypersurface in a Riemannian manifold with non-positive Ricci curvature. 
Then, $M$ is minimal.
\end{prop}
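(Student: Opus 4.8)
The plan is to reduce the statement to the elliptic differential inequality already established for hypersurfaces and then exploit compactness directly, exactly as in the compact case treated just after Lemma~\ref{key lemma}. First I would invoke Lemma~\ref{key 2 hyper}, which gives
\[
\Delta |{\bf H}|^2 \geq 2m\,|{\bf H}|^4 \quad \text{on } M.
\]
The only structural input here is the sign of the ambient curvature term: writing ${\bf H}=H\xi$, one has $\sum_{i=1}^m \langle R^N({\bf H},d\p(e_i))d\p(e_i),{\bf H}\rangle = H^2\,\mathrm{Ric}^N(\xi,\xi)\leq 0$ under the non-positive Ricci hypothesis, and together with $\lambda\geq 0$ this makes the biminimal Bochner-type computation of Lemma~\ref{key lemma} go through verbatim. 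So I would simply cite Lemma~\ref{key 2 hyper} for this inequality rather than redo the computation.

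Next, since $M$ is compact and the function $u:=|{\bf H}|^2\geq 0$ is smooth, it attains its maximum at some point $p\in M$. As $M$ is closed there is no boundary, so $\Delta u(p)\leq 0$ at this interior maximum. Substituting into the inequality at $p$ yields $0\geq \Delta u(p)\geq 2m\,u(p)^2\geq 0$, hence $u(p)=0$; since $p$ is a maximum point of the non-negative function $u$, this forces $u\equiv 0$ on $M$, i.e. ${\bf H}\equiv 0$, so $M$ is minimal.

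I do not expect any real obstacle: this is the hypersurface analogue of the compact case appearing right after Lemma~\ref{key lemma}, with Lemma~\ref{key 2 hyper} replacing Lemma~\ref{key lemma}, and in particular no properness or completeness assumption is needed because compactness lets the maximum principle act globally without the cut-off function $f_\rho$ used in Lemma~\ref{proper sub mfd minimal}. The single point worth a line of care is the sign of the curvature contraction under the weaker hypothesis of non-positive Ricci (rather than non-positive sectional) curvature, but this is precisely what Lemma~\ref{key 2 hyper} records, so I would handle it by reference.
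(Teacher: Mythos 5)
Your proposal is correct and follows the paper's own argument exactly: cite Lemma~\ref{key 2 hyper} for the inequality $\Delta |{\bf H}|^2 \geq 2m|{\bf H}|^4$ and then apply the standard maximum principle on the compact manifold $M$ to conclude ${\bf H}\equiv 0$. Your explicit evaluation at the maximum point of $|{\bf H}|^2$ is just the spelled-out form of the one-line maximum-principle argument the paper gives.
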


\begin{proof}
 Applying the standard maximum principle to the elliptic inequality (\ref{key 2}), 
we have that ${\bf H} = 0$ on $M$. 
\end{proof}

\begin{prop}
Let $(M,g)$ be a non-negative biminimal hypersurface in a Riemannian manifold with non-positive Ricci curvature. 
If the mean curvature is constant, then $M$ is minimal.
\end{prop}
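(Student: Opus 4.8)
The plan is to mimic the structure already used in this section: reduce the statement about a constant-mean-curvature non-negative biminimal hypersurface to the pointwise differential inequality established in Lemma~\ref{key 2 hyper}, and then exploit constancy of the mean curvature to force $|{\bf H}|^2$ to vanish. First I would note that, by hypothesis, $H$ is constant on $M$, hence $|{\bf H}|^2 = H^2$ is constant, so $\Delta |{\bf H}|^2 = 0$ identically on $M$.

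Next I would invoke Lemma~\ref{key 2 hyper}, which applies because $(M,g)$ is a non-negative biminimal hypersurface in a Riemannian manifold with non-positive Ricci curvature. The lemma yields the inequality $(\ref{key 2})$, namely $\Delta |{\bf H}|^2 \geq 2m |{\bf H}|^4$ on $M$. Combining this with the previous step gives $0 = \Delta |{\bf H}|^2 \geq 2m |{\bf H}|^4 \geq 0$, whence $|{\bf H}|^4 \equiv 0$, and therefore $|{\bf H}|^2 \equiv 0$; that is, $H \equiv 0$, so $M$ is minimal.

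I expect there to be essentially no obstacle here: the only subtlety is making sure the hypotheses of Lemma~\ref{key 2 hyper} are genuinely in force (non-positive Ricci curvature of the ambient $N^{m+1}$, and $\lambda \geq 0$), which are exactly what is assumed, and that ``constant mean curvature'' is read as $H$ being a constant function rather than merely $|{\bf H}|$ being bounded. Since for a hypersurface ${\bf H} = H\xi$ with $|{\bf H}| = |H|$, constancy of $H$ immediately gives constancy of $|{\bf H}|^2$, and the argument closes. This is the direct analogue, in the non-positive Ricci curvature hypersurface setting, of the earlier proposition deduced from Lemma~\ref{key lemma} for submanifolds in non-positively curved ambient spaces.
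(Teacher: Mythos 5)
Your argument is exactly the paper's proof, just written out in more detail: constancy of $H$ gives $\Delta|{\bf H}|^2=0$, and Lemma~\ref{key 2 hyper} then forces $|{\bf H}|^4\equiv 0$. Correct and identical in approach.
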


\begin{proof}
Since $\Delta |{\bf H}|^2=0$, by using $(\ref{key 2})$, we obtain the proposition.  
\end{proof}

\begin{thm}\label{main theorem 2}
Let $(N,h)$ be a complete Riemannian manifold with non-positive Ricci curvature.
 Assume that the sectional curvature $K^N$ has a 
polynomial growth bound of order less than $2$ from below.
Then, any non-negative biminimal properly immersed hypersurface in $N$ is minimal.
 \end{thm}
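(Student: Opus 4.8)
The plan is to follow the exact template established by Theorem \ref{main theorem}, substituting Lemma \ref{key 2 hyper} for Lemma \ref{key lemma} at the appropriate point. The starting observation is that the two technical ingredients needed in the proof of Lemma \ref{proper sub mfd minimal} are purely intrinsic-to-$N$ estimates: the Hessian comparison bound \eqref{est Hes} and the elementary inequalities \eqref{est nab}, \eqref{est Lap2} require only that $K^N$ have a polynomial growth bound of order $\alpha<2$ from below, together with $M$ being properly immersed. Neither of these uses non-positivity of the sectional curvature of $N$; that hypothesis entered only through Lemma \ref{key lemma} (via $K^N\le 0$ in \eqref{ell-ineq}). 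Here the curvature term in the biminimal equation \eqref{N-S biminimal} is $\sum_{i=1}^m\langle R^N(\mathbf H,d\p(e_i))d\p(e_i),\mathbf H\rangle$, and for a hypersurface with $\mathbf H=H\xi$ this collapses to $H^2\,{\rm Ric}^N(\xi,\xi)$, which is $\le 0$ precisely under the weaker hypothesis that $N$ has non-positive Ricci curvature.

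So the concrete steps are as follows. First, invoke Lemma \ref{key 2 hyper}: since $N$ has non-positive Ricci curvature and $M$ is a non-negative biminimal hypersurface, we obtain $\Delta|\mathbf H|^2\ge 2m|\mathbf H|^4$ on $M$, i.e. the key inequality \eqref{key assumption} of Lemma \ref{proper sub mfd minimal} holds with $k=2m>0$. Second, note that $M$ is properly immersed in $N$ and $K^N$ has a polynomial growth bound of order less than $2$ from below — exactly the remaining hypotheses of Lemma \ref{proper sub mfd minimal}. Third, apply Lemma \ref{proper sub mfd minimal} to conclude that $M$ is minimal. This is a one-line deduction once the hypotheses are checked, entirely parallel to the Proof of Theorem \ref{main theorem}.

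The only point deserving a moment's care — and the step I would flag as the place a careful reader should look — is that Lemma \ref{proper sub mfd minimal} as stated assumes $N$ has a polynomial growth bound on $K^N$ from below but makes \emph{no} sign assumption on $K^N$; one should double-check that its proof never secretly used $K^N\le 0$. Inspecting that proof, the curvature of $N$ is used only to bound the Hessian of $r$ from above via \eqref{est Hes}, which follows from the lower bound $K^N\ge -L(1+\rho^2)^{\alpha/2}$ on $\overline{\mathbf B}_\rho$ alone; the maximum-point argument, the gradient identity \eqref{grad}, the Laplacian inequality \eqref{Lap}, and the Calabi cut-locus trick are all insensitive to the sign of $K^N$. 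Hence Lemma \ref{proper sub mfd minimal} genuinely applies verbatim, and there is no real obstacle — the substance of Theorem \ref{main theorem 2} is entirely carried by the hypersurface computation ${\rm Ric}^N(\xi,\xi)\le 0$ in Lemma \ref{key 2 hyper}, which relaxes "non-positive sectional curvature" to "non-positive Ricci curvature."

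\begin{proof}[Proof of Theorem $\ref{main theorem 2}$]\quad
Since $(N,h)$ has non-positive Ricci curvature, Lemma $\ref{key 2 hyper}$ yields the inequality $(\ref{key 2})$, that is, $\Delta|{\bf H}|^2\geq 2m|{\bf H}|^4$ on $M$; in particular the assumption $(\ref{key assumption})$ of Lemma $\ref{proper sub mfd minimal}$ holds with $k=2m>0$. Moreover $M$ is properly immersed in $N$ and the sectional curvature $K^N$ has a polynomial growth bound of order less than $2$ from below. Hence Lemma $\ref{proper sub mfd minimal}$ applies and shows that $M$ is minimal.
\end{proof}

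\begin{cor}\label{main cor 2}
Let $(N,h)$ be a complete Riemannian manifold with non-positive Ricci curvature. Assume that the sectional curvature $K^N$ has a polynomial growth bound of order less than $2$ from below. Then, any biharmonic properly immersed hypersurface in $N$ is minimal.
\end{cor}

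\begin{proof}
Every biharmonic submanifold is free biminimal, hence non-negative biminimal with $\lambda=0$, so Theorem $\ref{main theorem 2}$ applies.
\end{proof}
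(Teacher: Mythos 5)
Your proof is correct and follows exactly the same route as the paper: apply Lemma \ref{key 2 hyper} to get the inequality $\Delta|{\bf H}|^2\geq 2m|{\bf H}|^4$, then invoke Lemma \ref{proper sub mfd minimal}. Your additional check that Lemma \ref{proper sub mfd minimal} uses only the lower curvature bound (via the Hessian comparison) and never the sign of $K^N$ is a worthwhile observation, but the argument itself coincides with the paper's.
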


\begin{proof}
By using Lemma $\ref{key 2 hyper}$, we obtain the inequality $(\ref{key 2})$.
Therefore, by using Lemma $\ref{proper sub mfd minimal}$, we obtain the theorem.
\end{proof}

Since every biharmonic submanifold is free biminimal one, we obtain the following result.

\begin{cor}
Let $(N,h)$ be a complete Riemannian manifold with non-positive Ricci curvature.
 Assume that the sectional curvature $K^N$ has a 
polynomial growth bound of order less than $2$ from below.
Then, any biharmonic properly immersed hypersurface in $N$ is minimal.
\end{cor}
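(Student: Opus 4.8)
The plan is to deduce this corollary from Theorem~\ref{main theorem 2}, which itself combines Lemma~\ref{key 2 hyper} with Lemma~\ref{proper sub mfd minimal}. First I would observe, via Remark~\ref{biharmonic is biminimal}, that any biharmonic isometric immersion satisfies the biminimal condition (\ref{biminimal eq}) with $\lambda=0$; hence a biharmonic properly immersed hypersurface $\p\colon(M^m,g)\to(N^{m+1},h)$ is in particular a \emph{non-negative} biminimal properly immersed hypersurface. So the hypotheses of Theorem~\ref{main theorem 2} hold verbatim, and it suffices to run that theorem's argument.

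Next I would extract the differential inequality. Writing ${\bf H}=H\xi$ for a unit normal $\xi$, the curvature term in (\ref{N-S biminimal}) satisfies $\sum_{i=1}^m\langle R^N({\bf H},d\p(e_i))d\p(e_i),{\bf H}\rangle=H^2\,\mathrm{Ric}^N(\xi,\xi)\le 0$ because $N$ has non-positive Ricci curvature. Plugging this, together with $\lambda\ge 0$ and the relation (\ref{BA rel}), into the Bochner-type computation of $\Delta|{\bf H}|^2$ exactly as in Lemma~\ref{key lemma}/Lemma~\ref{key 2 hyper} yields
\[
\Delta |{\bf H}|^2 \ \ge\ 2m\,|{\bf H}|^4 \qquad \text{on } M .
\]

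Finally I would apply Lemma~\ref{proper sub mfd minimal} with $k=2m>0$: since $M$ is properly immersed in the complete manifold $N$ whose sectional curvature has a polynomial growth bound of order $\alpha<2$ from below, that lemma forces ${\bf H}\equiv 0$, i.e.\ $M$ is minimal, which is the assertion of Theorem~\ref{main theorem 2} and hence of the corollary, giving an affirmative partial answer to Conjecture~\ref{glo gen Chen}. All the real work is already packaged inside Lemma~\ref{proper sub mfd minimal}: the cutoff $f_\rho=(\rho^2-r^2)^2|{\bf H}|^2$, the first- and second-order conditions at its maximum point $p$, the Hessian comparison estimate (\ref{est Hes}) using $K^N\ge -L(1+\rho^2)^{\alpha/2}$ on $\overline{\bf B}_\rho$, and the Calabi trick for cut-locus points. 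The one place the hypothesis $\alpha<2$ is essential is the exponent bookkeeping giving $f_\rho(p)\le c(1+\rho^2)^{(\alpha+6)/4}$, which makes the bound (\ref{Final}) tend to $0$ as $\rho\nearrow\infty$ at a fixed point $x_0$; since Lemma~\ref{proper sub mfd minimal} is already proved, I anticipate no further obstacle, and the remaining steps are purely formal.
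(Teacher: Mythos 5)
Your proposal is correct and follows exactly the paper's route: the corollary is deduced from Theorem~\ref{main theorem 2} via Remark~\ref{biharmonic is biminimal} (biharmonic $\Rightarrow$ free biminimal, i.e.\ biminimal with $\lambda=0\geq 0$), with the substance residing in Lemma~\ref{key 2 hyper} and Lemma~\ref{proper sub mfd minimal} as you describe. No gaps.
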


By using Lemma $\ref{Cheng-Yau}$ and Lemma $\ref{key 2 hyper}$, we also obtain the following result.

\begin{prop}\label{Ric below 2}
Let $(M,g)$ be a complete non-negative biminimal hypersurface in a Riemannian manifold with
 non-positive Ricci curvature.
 If the Ricci curvature of $M$ is bounded from below,
 then $M$ is minimal.
 \end{prop}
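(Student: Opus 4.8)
The plan is to derive the statement as a direct consequence of the two lemmas already at our disposal: the pointwise differential inequality for the mean curvature in Lemma~\ref{key 2 hyper}, and the Cheng--Yau generalized maximum principle recalled in Lemma~\ref{Cheng-Yau}. First I would check that the hypotheses of Lemma~\ref{key 2 hyper} are in force: $(M,g)$ is a non-negative biminimal hypersurface and the ambient manifold has non-positive Ricci curvature. Hence the inequality~(\ref{key 2}),
$$\Delta |{\bf H}|^2 \geq 2m\,|{\bf H}|^4 \quad \text{on } M,$$
holds. Recall that for a hypersurface one writes ${\bf H}=H\xi$, so that $u:=|{\bf H}|^2=H^2$ is a globally defined smooth non-negative function on $M$, even though $H$ itself need not have a sign; this smoothness is exactly what is needed to feed $u$ into the maximum principle.

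Next I would invoke Lemma~\ref{Cheng-Yau} with $u=|{\bf H}|^2$ and $k=2m$. The intrinsic hypotheses of that lemma are precisely the ones assumed here, namely that $(M,g)$ is complete and that its Ricci curvature is bounded from below; the differential inequality $\Delta u \geq k\,u^2$ is the content of~(\ref{key 2}). The lemma therefore yields $u\equiv 0$ on $M$, that is ${\bf H}\equiv 0$, so $M$ is minimal.

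I do not expect any genuine obstacle in this argument: in contrast with the proof of Lemma~\ref{proper sub mfd minimal}, where one must handle the cut locus of $q_0$, the Hessian comparison estimate~(\ref{est Hes}), and the polynomial growth bound on $K^N$, the present proposition is a two-step consequence of the two lemmas once one has the inequality~(\ref{key 2}). The only routine points worth noting are that the constant $2m$ appearing in~(\ref{key 2}) is genuinely positive (which holds since $m\geq 1$), and that no assumption on the ambient geometry beyond non-positivity of ${\rm Ric}^N$ enters — unlike Theorem~\ref{main theorem 2}, no polynomial growth bound on the ambient curvature and no properness of the immersion are required here, because the Cheng--Yau principle replaces the barrier construction used in the properly immersed case.
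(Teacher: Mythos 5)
Your proof is correct and is exactly the paper's argument: apply Lemma~\ref{key 2 hyper} to get $\Delta |{\bf H}|^2 \geq 2m|{\bf H}|^4$ and then the Cheng--Yau maximum principle (Lemma~\ref{Cheng-Yau}) with $u=|{\bf H}|^2$ and $k=2m$. No differences to report.
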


Since every biharmonic submanifold is free biminimal one, we obtain the following result.

\begin{cor}
Let $(M,g)$ be a complete biharmonic hypersurface in a Riemannian manifold with
 non-positive Ricci curvature.
 If the Ricci curvature of $M$ is bounded from below,
 then $M$ is minimal.
\end{cor}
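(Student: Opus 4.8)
The plan is to deduce this corollary immediately from Proposition~\ref{Ric below 2}, after observing that a biharmonic hypersurface is nothing but a special non-negative biminimal hypersurface. First I would invoke Remark~\ref{biharmonic is biminimal}: every biharmonic submanifold is free biminimal, i.e.\ it satisfies the biminimal equation~(\ref{biminimal eq}) with $\lambda=0$. In particular, choosing the trivial value $\lambda=0\geq 0$, a complete biharmonic hypersurface $(M,g)$ in $(N,h)$ is a complete \emph{non-negative} biminimal hypersurface in $N$, so all hypotheses of Proposition~\ref{Ric below 2} are met once we know $N$ has non-positive Ricci curvature and the Ricci curvature of $M$ is bounded from below, which are exactly the standing assumptions here.

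Unwinding Proposition~\ref{Ric below 2} for completeness: since $N$ has non-positive Ricci curvature, Lemma~\ref{key 2 hyper} applies and yields the pointwise differential inequality $\Delta|{\bf H}|^2\geq 2m\,|{\bf H}|^4$ on $M$. Setting $u:=|{\bf H}|^2$, this is precisely an inequality of the form $\Delta u\geq k u^2$ with $u\geq 0$ smooth and $k=2m>0$. Because $M$ is complete with Ricci curvature bounded from below, the Cheng--Yau generalized maximum principle (Lemma~\ref{Cheng-Yau}) forces $u\equiv 0$, that is ${\bf H}\equiv 0$, so $M$ is minimal. Alternatively, one simply cites Proposition~\ref{Ric below 2} verbatim after the reduction of the preceding paragraph.

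As for difficulty: there is essentially no new obstacle at the level of this corollary — the whole content has already been packaged into Lemma~\ref{key 2 hyper} and Lemma~\ref{Cheng-Yau}, and the only thing to verify is the (immediate) reduction of ``biharmonic'' to ``non-negative biminimal with $\lambda=0$''. If one wanted to locate where the real work sits, it is in the Bochner-type computation behind Lemma~\ref{key 2 hyper}: controlling the curvature term $\sum_i\langle R^N({\bf H},d\p(e_i))d\p(e_i),{\bf H}\rangle = H^2\,{\rm Ric}^N(\xi,\xi)$ by its sign, together with the Cauchy--Schwarz bound $|B_n|_g^2\geq m|{\bf H}|^2$, and in establishing the Cheng--Yau principle itself; but both are available, so the corollary follows by a one-line assembly.
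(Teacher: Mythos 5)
Your proof is correct and follows the paper's own route exactly: the paper likewise deduces this corollary from Proposition~\ref{Ric below 2} via the observation that a biharmonic hypersurface is a free (hence non-negative) biminimal one, with the real content residing in Lemma~\ref{key 2 hyper} and the Cheng--Yau maximum principle. No issues to report.
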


\vspace{10pt}

\noindent 
{\bf Acknowledgements.} 
The author would like to express his gratitude to
Professor Kazuo Akutagawa and Professor Hajime Urakawa for valuable suggestions and helpful discussions.




\qquad\\
\qquad\\

\bibliographystyle{amsbook}

\vspace{10pt}

\end{document}